\newenvironment{denseBibitem}[1]
{\bibitem{#1}\begin{minipage}[t]{\textwidth}}
{\end{minipage}}
\providecommand{\defnterm}[1]{{\em #1}}
\newtheorem{theorem}{Theorem}[section]
\newtheorem{definition}[theorem]{Definition}
\newtheorem{proposition}[theorem]{Proposition}
\newtheorem{corollary}[theorem]{Corollary}
\newtheorem{lemma}[theorem]{Lemma}
\newtheorem{fact}[theorem]{Remark}
\newtheorem{exemplu}[theorem]{Example}
\newtheorem{exercise}{Exercise}
\newtheorem{notation}[theorem]{Notation}
\newtheorem{remark}[theorem]{Remark}
\newtheorem{problem}[theorem]{Problem}
\newcommand{\bdfn}{\begin{definition}}
\newcommand{\edfn}{\end{definition}}
\newcommand{\bthm}{\begin{theorem}}
\newcommand{\ethm}{\end{theorem}}
\newcommand{\bprop}{\begin{proposition}}
\newcommand{\eprop}{\end{proposition}}
\newcommand{\bcor}{\begin{corollary}}
\newcommand{\ecor}{\end{corollary}}
\newcommand{\blem}{\begin{lemma}}
\newcommand{\elem}{\end{lemma}}
\newcommand{\bfact}{\begin{fact}}
\newcommand{\efact}{\end{fact}}
\newcommand{\bex}{\begin{exemplu}\begin{rm}}
\newcommand{\eex}{\end{rm}\end{exemplu}}
\newcommand{\bxc}{\begin{exercise}}
\newcommand{\exc}{\end{exercise}}
\newcommand{\bntn}{\begin{notation}}
\newcommand{\entn}{\end{notation}}
\newcommand{\be}{\begin{enumerate}}
\newcommand{\ee}{\end{enumerate}}
\newcommand{\bce}{\begin{center}}
\newcommand{\ece}{\end{center}}
\newcommand{\bi}{\begin{itemize}}
\newcommand{\ei}{\end{itemize}}
\newcommand{\bt}{\begin{tabular}}
\newcommand{\et}{\end{tabular}}
\newcommand{\beq}{\begin{equation}}
\newcommand{\eeq}{\end{equation}}
\newcommand{\ba}{\begin{array}} 
\newcommand{\ea}{\end{array}}
\newcommand {\bea} {\begin{eqnarray}}
\newcommand {\eea} {\end {eqnarray}}
\newcommand {\bua} {\begin{eqnarray*}}
\newcommand {\eua} {\end {eqnarray*}}
\newcommand{\Ra}{\Rightarrow}
\newcommand{\se}{\subseteq}
\newcommand{\Lra}{\Leftrightarrow}
\newcommand{\uae}{The following are equivalent}
\newcommand{\ds}{\displaystyle}
\def\R{{\mathbb R}}
\def\N{{\mathbb N}}
\def\Z{{\mathbb Z}}
\def\R{{\mathbb R}}
\newcommand{\vp}{\varphi}
\newcommand{\eps}{\varepsilon}
\newcommand{\limn}{\ds\lim_{n\to\infty}}
\newcommand{\limk}{\ds\lim_{k\to\infty}}
\newcommand{\limp}{\ds\lim_{p\to\infty}}
\newcommand{\lsupn}{\ds \limsup_{n\to\infty}}
\newcommand{\lsupk}{\ds \limsup_{k\to\infty}}
\newcounter{ct}
\newcommand{\lambdaxy}{(1-\lambda)x\oplus\lambda y}
\title{Effective metastability of Halpern iterates in CAT(0) spaces}
\author{ U. Kohlenbach${}^1$, L. Leu\c stean$^{2}$\\[0.2cm]
\footnotesize ${}^1$ Department of Mathematics, Technische Universit\" at Darmstadt,\\
\footnotesize Schlossgartenstrasse 7, 64289 Darmstadt, Germany\\[0.1cm]
\footnotesize${}^2$ Simion Stoilow Institute of Mathematics of the Romanian Academy, \\
\footnotesize Research unit 5, P. O. Box 1-764, RO-014700 Bucharest, Romania\\[0.1cm]
\footnotesize E-mails: kohlenbach@mathematik.tu-darmstadt.de, Laurentiu.Leustean@imar.ro.
}
\begin{document}
\date{} 
\maketitle

\begin{abstract}
This paper provides an effective uniform rate of metastability (in the
sense of Tao) on the strong convergence of Halpern iterations of
nonexpansive mappings in CAT(0) spaces. The extraction of this rate from
an ineffective proof due to Saejung is an instance of the general proof
mining program which uses tools from mathematical logic to uncover hidden
computational content from proofs. This methodology is applied here for
the first time to a proof that uses Banach limits and hence makes a
substantial reference to the axiom of choice.\\

\noindent {\em MSC:} 47H09, 47H10, 03F10, 53C23.\\

\noindent {\em Keywords:} Proof mining, Banach limits, metastability, nonexpansive mappings, CAT(0) spaces, Halpern iterations.
\end{abstract}

\section{Introduction} 

This paper applies techniques from mathematical logic to extract an 
explicit uniform rate of metastability (in the sense of Tao \cite{Tao07,Tao08}) from a recent proof due to Saejung \cite{Sae10} of a strong convergence theorem for 
Halpern iterations in the context of CAT$(0)$ spaces. The theorem in 
question has been established originally in the context of Hilbert spaces 
by Wittmann in the important paper \cite{Wit92} and can there be viewed 
as a strong nonlinear generalization of the classical von Neumann mean ergodic 
theorem. 
Indeed, Wittmann's theorem says that under suitable 
conditions on a sequence of scalars $(\lambda_n)$ in $[0,1]$, including 
the case $\ds \lambda_n:=\frac{1}{n+1}$, the so-called Halpern iteration 
\[ x_0:=x, \ \ x_{n+1}:=\lambda_{n+1} x+(1-\lambda_{n+1})Tx_n \] 
of a nonexpansive selfmapping $T:C\to C$ of a bounded closed and convex subset 
$C\se X$ strongly converges to a fixed point of $T.$ If $T$ is, moreover, 
linear and $\ds \lambda_n:=\frac{1}{n+1},$ then $x_n$ coincides with the ergodic 
average $\ds \frac{1}{n+1}\sum^{n}_{i=0} T^ix$ from the mean ergodic 
theorem. 

Since Wittmann's theorem does not refer to any linearity but only to 
a convexity structure of the underlying space $X$ (in order to make sense 
of the Halpern iteration) it can be formulated in the context of hyperbolic 
spaces and was established by Saejung \cite{Sae10} for the important 
subclass of CAT(0) spaces which play the analogous role in the context of 
hyperbolic spaces as the Hilbert spaces do among all Banach spaces. 

As shown in \cite{AviGerTow10}, even for the (linear) mean 
ergodic theorem, there, in general, is no computable rate of convergence 
for $(x_n).$ The next best thing to achieve, therefore, is a 
rate of metastability, i.e. a bound $\Phi(k,g)$ such that 
\[ (1) \ \forall k\in \N\,\forall g:\N\to\N\,\exists n\le \Phi(k,g) \,
\forall i,j\in [n,n+g(n)]\ \big( \| x_i-x_j\| \le 2^{-k}\big). \]

There are general logical metatheorems due to the first author 
\cite{Koh05} and Gerhardy and the first author \cite{GerKoh08} 
that guarantee the extractability of computable and highly uniform such 
bounds $\Phi(k,g)$ from large classes of (even highly ineffective) proofs. 
Moreover, these bounds have a restricted complexity depending on the 
principles that are used in the proof rather than merely being computable 
(see \cite{Koh08-book} for a comprehensive treatment).

A rate of metastability is an instance of the concept of no-counterexample 
interpretation that was introduced in the context of mathematical logic 
by Kreisel in the 50's \cite{Kre51,Kre52}: 
as $g$ may be viewed as an attempt to refute the 
Cauchy property of $(x_n)$, the functional $\Phi(k,g)$ in $(1)$ 
provides a bound 
on a counterexample $n$ to such a refutation.  Note that since $g$ may be 
an arbitrary number theoretic function, the seemingly weaker form  
\[ (2) \ \forall k\in \N\,\forall g:\N\to\N\,\exists n\in\N \,
\forall i,j\in [n,n+g(n)]\ \big( \| x_i-x_j\| \le 2^{-k}\big) \] 
of the Cauchy property actually implies back the full Cauchy property, 
though only ineffectively so. Because of the latter point, the existence 
of an effective bound on $(2)$ does not contradict the aforementioned fact 
that there is no effective Cauchy rate for $(x_n)$ available. 

By the uniformity of the bound 
$\Phi$ we refer to the fact that it is independent of 
the operator $T,$ the point $x\in C$ as well as of $C$ and $X$ but only 
depends -- in addition to $k$ and $g$ -- on a bound on the diameter of $C$ as 
well as -- in the case of general $(\lambda_n)$ -- certain moduli on 
$(\lambda_n).$ 

Based on the aforementioned logical metatheorems, 
\cite{AviGerTow10} extracted the first explicit such uniform 
bound $\Phi$ for the mean ergodic theorem from its usual textbook proof. 
Subsequently, in \cite{KohLeu09} the current authors extracted 
such bound for the more general class of uniformly convex Banach spaces from 
a proof due to G. Birkhoff. That bound -- when specialized to the Hilbert 
space setting -- even turned out to be numerically better than the one 
from \cite{AviGerTow10}. 

In \cite{Koh11}, the first author extracted -- making use of a rate of 
asymptotic regularity due to the second author \cite{Leu07} --
a rate of metastability of 
similar complexity for Wittmann's nonlinear ergodic theorem (in the Hilbert 
case). Wittmann's proof is based on weak compactness which, though covered 
by the existing proof mining machinery, in general can cause bounds of 
extremely poor quality. In the case at hand that could be avoided as 
during the logical extraction procedure the use of weak compactness 
turned out to be eliminable.

In the present paper, we extract a rate of metastability from 
Saejung's \cite{Sae10} 
generalization of Wittmann's theorem to the CAT$(0)$-setting. 
In addition to the interest of this specific result, our paper is of 
broader relevance in the proof mining program as it opens up new 
frontiers for its applicability namely to proofs that prima facie use some 
substantial amount of the axiom of choice. This stems from the use 
of Banach limits made in \cite{Sae10}. The existence of Banach limits 
is either proved by applying the Hahn-Banach theorem to $l^{\infty}$ 
which due to the nonseparability of that space needs the axiom of choice, 
or via ultralimits which, again, needs choice. While weak compactness as used 
in Wittmann's proof at least was in principle covered by existing 
metatheorems mentioned above, this is not the case for Banach limits. 
Though it seems likely that these metatheorems can be extended to 
incorporate at least basic reasoning with Banach limits as we intend 
to discuss in a different paper, we take the route in this paper to 
show how to replace the use of Banach limits in the present 
proof by a direct arithmetical reasoning. 
As the way Banach limits are 
used in the proof at hand seems to be rather typical for other proofs 
in fixed point theory, our paper may also be seen as providing a 
blueprint for doing similar unwindings in those cases as well. Usually, 
a Banach limit is used to establish the almost convergence in the sense of 
Lorentz of some sequence $(a_n)$ of reals towards $a$ which -- together with 
$\lsupn (a_{n+1}-a_n)\le 0$ -- in turn implies that $\lsupn a_n\le a.$ 
This line of reasoning goes back to Lorentz' classical paper 
\cite{Lor48} whose relevance in nonlinear ergodic theory was first 
realized by Reich \cite{Rei78}. In \cite{ShiTak97}, Banach 
limits are used in this way to establish Wittmann's theorem for uniformly 
G\^{a}teaux differentiable Banach spaces (under suitable conditions on $C$). 
This paper has subsequently been analyzed using the method developed 
in this paper in \cite{KohLeu12}.
Other relavent papers using Banach limits in the context of nonlinear 
ergodic theory are \cite{BruRei:81,ReichWallwater,KopeckaReich}.

As an intermediate step in proving our main results we also obtain in Section \ref{effective-as-reg} (essentially due to the second author in \cite{Leu09-Habil}) a uniform effective rate of asymptotic regularity. i.e. a rate of convergence of $(d(x_n,Tx_n))$ towards $0$, which holds in general W-hyperbolic spaces. As this bound, in particular, does not depend on $x$ and $T$, it provides a quantitative version of the main result in \cite{AoyEshTak07} (see their `Theorem 3.3').
 
\section{Preliminaries}

We shall consider hyperbolic spaces as introduced by the first author \cite{Koh05}. In order to distinguish them from Gromov hyperbolic spaces or from other notions of hyperbolic space  that can be found in the metric fixed point theory literature (see for example \cite{Kir82,GoeKir83,ReiSha90}), we shall call them W-hyperbolic spaces. 

A \defnterm{$W$-hyperbolic space}  $(X,d,W)$ is a metric space $(X,d)$ together with a mapping $W:X\times X\times [0,1]\to X$ satisfying 
\begin{eqnarray*}
(W1) & d(z,W(x,y,\lambda))\le (1-\lambda)d(z,x)+\lambda d(z,y),\\
(W2) & d(W(x,y,\lambda),W(x,y,\tilde{\lambda}))=|\lambda-\tilde{\lambda}|\cdot 
d(x,y),\\
(W3) & W(x,y,\lambda)=W(y,x,1-\lambda),\\
(W4) & \,\,\,d(W(x,z,\lambda),W(y,w,\lambda)) \le (1-\lambda)d(x,y)+\lambda
d(z,w).
\end {eqnarray*}

The convexity mapping $W$ was first considered by Takahashi in \cite{Tak70}, where a triple $(X,d,W)$ satisfying $(W1)$ is called a \defnterm{convex metric space}. We refer to \cite[p. 384-387]{Koh08-book} for a detailed discussion.

The class of $W$-hyperbolic spaces includes normed spaces and convex subsets thereof, the Hilbert ball (see \cite{GoeRei84} for a book treatment)  as well as 
CAT(0) spaces \cite{BriHae99}.

If $x,y\in X$ and $\lambda\in[0,1]$, then we use the notation $(1-\lambda)x\oplus \lambda y$ for $W(x,y,\lambda)$. It is easy to see that for all $x,y\in X$ and  $\lambda\in[0,1]$,
\beq
d(x,\lambdaxy)=\lambda d(x,y)\quad \text{~and~}\quad  d(y,\lambdaxy)=(1-\lambda)d(x,y). \label{prop-xylambda}
\eeq
Furthermore, $1x\oplus 0y=x,\,0x\oplus 1y=y$ and $(1-\lambda)x\oplus \lambda x=\lambda x\oplus (1-\lambda)x=x$.

For all $x,y\in X$, we shall denote by $[x,y]$ the set $\{(1-\lambda)x\oplus \lambda y:\lambda\in[0,1]\}$. A subset $C\subseteq X$ is said to be \defnterm{convex} if $[x,y]\se C$ for all $x,y\in C$. A nice feature of our setting is that any convex subset is itself a $W$-hyperbolic space with the restriction of $d$ and $W$ to $C$.

Let us recall now some notions on geodesic spaces. Let $(X,d)$ be a metric space. A geodesic path, geodesic for short, in $X$ is a map $\gamma:[a,b]\to X$ which is distance-preserving, that is 
\beq 
d(\gamma(s),\gamma(t))=|s-t| \text{~~for all~~} s,t\in [a,b].
\eeq
A \defnterm{geodesic segment} in $X$ is the image of a geodesic $\gamma:[a,b]\to X$, the points $x:=\gamma(a)$ and $y:=\gamma(b)$ being the endpoints of the segment. We say that the geodesic  segment $\gamma([a,b])$ joins x and y. The metric space $(X,d)$ is said to be a \defnterm{(uniquely) geodesic space} if every two distinct points are joined by a (unique) geodesic segment. It is easy to see that any $W$-hyperbolic space is geodesic.

A \defnterm{CAT(0) space} is a geodesic space $(X,d)$ satisfying the so-called {\bf CN}-inequality of Bruhat-Tits \cite{BruTit72}: for all $x,y,z\in X$  and  $m\in X$ with $\ds d(x,m)=d(y,m)=\frac12 d(x,y)$,
\beq
d(z,m)^2\leq \frac12d(z,x)^2+\frac12d(z,y)^2-\frac14d(x,y)^2. \label{CN-ineq}
\eeq 

The fact that this definition of a CAT(0) space is equivalent to the usual definition using geodesic triangles is an exercise in \cite[p. 163]{BriHae99}. Complete CAT(0) spaces are often called Hadamard spaces. One can show that  CAT(0) spaces are uniquely geodesic and that a normed space is a CAT(0) space if and only if it is a pre-Hilbert space.

CAT(0) spaces can be defined also in terms of $W$-hyperbolic spaces.

\blem\label{char-CAT0}\cite[p. 386-388]{Koh08-book} 
Let $(X,d)$ be a metric space. \uae.
\be
\item \label{eq-CAT} $X$ is a {\rm CAT(0)} space.
\item \label{eq-W+CN} There exists a a convexity mapping $W$ such that $(X,d,W)$ is a $W$-hyperbolic space satisfying the {\bf CN} inequality (\ref{CN-ineq}).
\ee
\elem

The following property of  CAT(0) spaces will be very useful in the following. We refer to \cite[Lemma 2.5]{DhoPan08} for a proof.

\bprop
Let $(X,d)$ be a {\rm CAT(0)} space. Then  for all $x,y,z\in X$ and  $\lambda\in[0,1]$.
\beq
d^2((1-\lambda)x\oplus \lambda y, z) \leq (1-\lambda)d^2(x,z)+\lambda d^2(y,z)-\lambda(1-\lambda)d^2(x,y). \label{CAT0-ineq-t}
\eeq
\eprop

We recall now some terminology needed for our quantitative results. Let $(a_n)_{n\geq 1}$ be a sequence of real numbers and $a\in\R$. In the following $\N=\{0,1,2,\ldots\}$ and $\Z_+=\{1,2,\ldots\}$.

If the series $\ds \sum_{n=1}^\infty a_n$ is divergent, then a function $\gamma:\Z_+\to\Z_+$ is called a {\em rate of divergence} of the series if $\ds \sum_{k=1}^{\gamma(n)}a_k \geq n$ for all $n\in\Z_+$.

If $\ds\limn a_n=a$, then a function $\gamma:(0,\infty)\to\Z_+$ is said to be a {\em rate of convergence} of $(a_n)$ if 
\beq
\forall \eps>0\,\forall n\geq \gamma(\eps)\,\,\left(|a_n-a| \le \eps\right).\label{def-rate-conv}
\eeq

Assume that $(a_n)$ is Cauchy. Then 
\be
\item a mapping $\gamma:(0,\infty)\to\Z_+$ is called a {\em Cauchy modulus} of $(a_n)$ if 
\beq
\forall \eps>0\,\forall n\in\N\,\,\left(a_{\gamma(\eps)+n}-a_{\gamma(\eps)} \le \eps\right).\label{def-mod-Cauchy}
\eeq
\item a mapping $\Psi:(0,\infty)\times \N^\N\to\Z_+$ is called a \defnterm{rate of metastability} of  $(a_n)$ if 
\beq
\forall \eps>0\,\forall g:\N\to\N\,\exists N\le \Psi(\eps,g)\,\,\forall m,n\in[N,N+g(N)]\,\,(|a_n-a_m|\le \eps).\label{def-rate-meta}
\eeq
\ee

Finally, we say that $\ds \lsupn a_n\le 0$ with \defnterm{effective rate} $\theta:(0,\infty)\to\Z_+$ if
\beq
\forall \eps>0\,\forall n\ge \theta(\eps)\,\, (a_n\le \eps).\label{def-rate-limsup}
\eeq

\section{Halpern iterations}

Let $C$ be a convex subset of a normed space $X$ and  $T:C\to C$ nonexpansive. The  so-called {\em Halpern iteration} is defined as follows:
\beq
x_0:=x, \quad x_{n+1}:=\lambda_{n+1}u+(1-\lambda_{n+1})Tx_n,
\label{intro-def-Halpern-iterate}
\eeq
where $(\lambda_n)_{n\ge 1}$ is a sequence in $[0,1]$, $x\in C$ is the starting point and $u\in C$ is the anchor.

If $T$ is positively homogeneous (i.e. $T(tx)=tT(x)$ for all $t\geq 0$ and all $x\in C$), $\ds \lambda_n=\frac{1}{n+1}$ and $u=x$, then
\beq
x_n=\frac{1}{n+1}\,S_nx \label{xn-1-n+1-T-pos-homogeneous}, \quad\text{where}\quad S_0x=x, \,\,\,S_{n+1}x=x+T(S_nx).
\eeq
Furthermore, if $T$ is linear, then $\ds x_n=\frac{1}{n+1}\ds\sum_{i=0}^nT^ix$, so the Halpern iteration could be regarded as a nonlinear generalization of the usual Ces\`aro average. We refer to \cite{Wit91,LinWit91} for a a systematic study of the behavior of iterations given by (\ref{xn-1-n+1-T-pos-homogeneous}).

The following problem was formulated by Reich \cite{Rei83} (see also 
\cite{Rei74}) and it is still open in its full generality.

\begin{problem}\cite[Problem 6]{Rei83}\label{problem-Reich}\\
Let $X$ be a Banach space. Is there a sequence $(\lambda_n)$ such that whenever a weakly compact convex subset $C$ of $X$ possesses the fixed point property for nonexpansive mappings, then $(x_n)$ converges to a fixed point of $T$ for all $x\in C$ and all nonexpansive mappings $T:C\to C$ ?
\end{problem}

Different conditions on $(\lambda_n)$ were considered in the literature 
(see also \cite{Suz09} for even more conditions):
\[\ba{l}
(C1) \qquad \ds \lim_{n\to\infty} \lambda_n =0,\\
(C2) \qquad  \ds \sum_{n=1}^\infty|\lambda_{n+1}-\lambda_n| \text{ converges},\\
(C3) \qquad  \ds \sum_{n=1}^\infty \lambda_n=\infty,\\
(C4)  \qquad \ds \prod_{n=1}^\infty (1-\lambda_n)=0,
\ea\]
and, in the case $\lambda_n>0$ for all $n\ge 1$,
\[\ba{l}
(C5) \quad \ds\limn\frac{\lambda_n-\lambda_{n+1}}{\lambda_{n+1}^2}=0,\\
(C6) \quad  \ds\limn\frac{\lambda_n-\lambda_{n+1}}{\lambda_{n+1}}=0.
\ea
\]
For sequences $\lambda_n$ in $(0,1)$, conditions (C3) and (C4) are equivalent. 

Halpern \cite{Hal67} initiated the study in the Hilbert space setting of the convergence of a particular case of the scheme (\ref{intro-def-Halpern-iterate}). He proved that the sequence $(x_n)$, obtained by taking $u=0$ in (\ref{intro-def-Halpern-iterate}), converges to a fixed point of $T$ for $(\lambda_n)$ satisfying certain conditions, two of which are (C1) and (C3). P.-L. Lions \cite{Lio77} improved Halpern's result by showing the convergence of the general $(x_n)$ if $(\lambda_n)$ satisfies (C1), (C3) and (C5). However, both Halpern's and Lions' conditions exclude the natural choice $\ds\lambda_n=\frac{1}{n+1}$.

This was overcome by Wittmann \cite{Wit92}, who obtained the most important result on the convergence of Halpern iterations in Hilbert spaces.

\begin{theorem}\label{wittmann-thm}\cite{Wit92}
Let $C$  be a closed convex subset of a Hilbert space $X$ and $T:C\to C$ a nonexpansive mapping such that the set $Fix(T)$ of fixed points of $T$ is nonempty. Assume that $(\lambda_n)$ satisfies  (C1), (C2) and (C3).
Then for any $x\in C$, the Halpern iteration $(x_n)$ converges to the projection $Px$ of $x$ on $Fix(T)$.
\end{theorem}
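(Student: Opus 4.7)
The plan is to follow Wittmann's original strategy, which breaks the argument into three essentially independent quantitative components and then combines them via a recursive inequality of Xu--Reich type.

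First, I would check that $(x_n)$ is bounded. Picking any $p\in Fix(T)$ and using that $T$ is nonexpansive, the defining recursion $x_{n+1}=\lambda_{n+1}x+(1-\lambda_{n+1})Tx_n$ gives, by the triangle inequality,
\[
\|x_{n+1}-p\|\le\lambda_{n+1}\|x-p\|+(1-\lambda_{n+1})\|x_n-p\|,
\]
so an easy induction yields $\|x_n-p\|\le\|x-p\|$ for all $n$. In particular $(x_n)$, $(Tx_n)$ and $(x-Tx_n)$ are all bounded. Next I would establish asymptotic regularity, i.e.\ $\|x_n-Tx_n\|\to 0$. From the recursion one first extracts
\[
\|x_{n+1}-x_n\|\le(1-\lambda_{n+1})\|x_n-x_{n-1}\|+M|\lambda_{n+1}-\lambda_n|
\]
for a uniform constant $M$ bounding $\|x-Tx_{n-1}\|$. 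Condition (C2) makes the perturbation term summable and conditions (C3)/(C4) force the homogeneous part to $0$; a standard lemma then gives $\|x_{n+1}-x_n\|\to 0$. Combining with $\|x_{n+1}-Tx_n\|=\lambda_{n+1}\|x-Tx_n\|\to 0$ by (C1) yields $\|x_n-Tx_n\|\to 0$.

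The main geometric step is the bound
\[
\limsup_{n\to\infty}\langle x-Px,\,x_n-Px\rangle\le 0.
\]
To obtain it I would introduce the auxiliary contractive maps $T_t(y):=tx+(1-t)Ty$ for $t\in(0,1)$; each has a unique fixed point $z_t$, and a classical Reich-type argument (using demiclosedness of $I-T$ together with weak compactness of bounded closed convex sets in Hilbert space and the characterization of the metric projection) shows that $z_t\to Px$ as $t\to 0^+$. The variational characterization of $z_t$ combined with $\|x_n-Tx_n\|\to 0$ then yields the $\limsup$ inequality above. I expect this to be the hard step, both because it is where weak compactness (or equivalently, in the nonlinear generalization, a Banach limit) is used, and because it is the place where the specific identification of the limit with $Px$ gets pinned down.

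Finally I would combine everything using the Hilbert identity
\[
\|x_{n+1}-Px\|^2\le(1-\lambda_{n+1})^2\|x_n-Px\|^2+2\lambda_{n+1}\langle x-Px,\,x_{n+1}-Px\rangle,
\]
writing $s_n:=\|x_n-Px\|^2$ and noting that the second summand behaves like $\lambda_{n+1}\beta_n$ with $\limsup\beta_n\le 0$. An application of the Xu--Reich lemma (if $s_{n+1}\le(1-\lambda_{n+1})s_n+\lambda_{n+1}\beta_n$ with $\sum\lambda_n=\infty$ and $\limsup\beta_n\le 0$, then $s_n\to 0$) then delivers $x_n\to Px$, finishing the proof.
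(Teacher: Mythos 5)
The paper does not prove this theorem; it cites it as Wittmann's 1992 result, so there is no in-paper proof to match your sketch against. The closest analogue is the quantitative analysis in Section 9 of the CAT(0) generalization (Theorem \ref{Halpern-rate}), whose scaffolding is the same three-part one you use: boundedness plus asymptotic regularity from (C1)--(C3); a limsup inequality that identifies the limit as the projection; and a Xu-type recursive lemma (here Lemma \ref{Aoyama-all-seq-reals}) to close. Your outline is correct, and the genuinely different choice is in the middle step. You obtain $\limsup_n\langle x-Px,\,x_n-Px\rangle\le 0$ via weak compactness and demiclosedness of $I-T$, which is the right Hilbert-space route --- and in fact you could even bypass the resolvent points $z_t$ there: a bounded subsequence $x_{n_k}\rightharpoonup z$ realizing the limsup has $z\in Fix(T)$ by demiclosedness, whence $\limsup_n\langle x-Px,\,x_n-Px\rangle=\langle x-Px,\,z-Px\rangle\le 0$ by the projection characterization, no $z_t$ needed. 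By contrast, the proof the paper actually analyzes (Saejung's, following Shioji--Takahashi) replaces weak compactness by a Banach-limit argument applied to the quantities $\gamma_n^t$ of Proposition \ref{properties-yn-ztu-gammant}, because CAT(0) spaces have no weak topology; this is what forces the Banach-limit-elimination machinery of Section \ref{elim-Ban-lim}, and the introduction explicitly contrasts this with Wittmann's weak-compactness route. One small patch at the end of your argument: before applying the Xu--Reich lemma in the form you quote, replace $(1-\lambda_{n+1})^2$ by $(1-\lambda_{n+1})$ (legitimate since $\lambda_{n+1}\in[0,1]$) so that the recursion matches the lemma's hypothesis exactly.
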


All the partial answers to Reich's problem require that the sequence $(\lambda_n)$ satisfies (C1) and (C3).  Halpern \cite{Hal67} showed in fact that  conditions (C1) and (C3) are necessary in the sense that if, for every closed convex subset $C$ of a Hilbert space $X$ and every nonexpansive mappings $T:C\to C$ such that $Fix(T)\ne\emptyset$, the Halpern iteration $(x_n)$ converges to a fixed point of $T$, then $(\lambda_n)$ must satisfy (C1) and (C3). 
That (C1) and (C3) alone are not sufficient to guarantee the convergence of 
$(x_n)$ was shown in \cite{Suz09}. 
Recently, Chidume and Chidume \cite{ChiChi06} and Suzuki \cite{Suz07} proved that if  the nonexpansive mapping $T$ in (\ref{intro-def-Halpern-iterate}) is averaged, then (C1) and (C3) suffice for obtaining the convergence of $(x_n)$.

Halpern obtained his result by applying a limit theorem for the resolvent, first shown by Browder \cite{Bro67}. This approach has the advantage that the result can be immediately generalized, once the limit theorem for the resolvent is generalized. This was done by Reich \cite{Rei80}.

\bthm \cite{Rei80} \label{Reich-theorem-Halpern}
Let $C$ be a closed convex subset of a uniformly smooth Banach space $X$, and let $T:C\to C$ be nonexpansive such that $Fix(T)\ne\emptyset$. For each $u\in C$ and $t\in (0,1)$, let $z_t^u$ denote the unique fixed point of the contraction mapping 
$$T_t(\cdot)=tu+(1-t)T(\cdot).$$
Then $\ds\lim_{t\to 0^+}z_t^u$ exists and is a fixed point of $T$.
\ethm
A similar result was obtained by Kirk \cite{Kir03} for CAT(0) spaces 
(for the Hilbert ball, which is an example of a CAT(0) space, this is 
already due to \cite{GoeRei84}). 
As a consequence of Theorem \ref{Reich-theorem-Halpern}, a partial positive answer to Problem \ref{problem-Reich} was obtained \cite{Rei80} for uniformly smooth Banach spaces and $\ds\lambda_n=\frac1{(n+1)^\alpha}$ with $0<\alpha < 1$. Furthermore, Reich \cite{Rei94} proved the strong convergence of $(x_n)$ in the setting of uniformly smooth Banach spaces that have a weakly sequentially continuous duality mapping for general $(\lambda_n)$ satisfying (C1), (C3) and being decreasing. Another partial answer in the case of uniformly smooth Banach spaces was obtained by Xu \cite{Xu02} for $(\lambda_n)$ satisfying (C1), (C3) and (C6) (which is weaker than Lions' (C5)). In \cite{ShiTak97}, Shioji and Takahashi extended Wittmann's result  to Banach spaces with uniformly G\^ ateaux differentiable norm and  with the property that $\ds\lim_{t\to 0^+}z_t^u$ exists and is a fixed point of $T$.

\section{Main results}

Let $T:C\to C$ be a nonexpansive selfmapping of a convex subset $C$ of a  W-hyperbolic space
$(X,d,W)$. We can define the Halpern iteration in this setting too:
\beq
x_0:=x, \quad x_{n+1}:=\lambda_{n+1}u\oplus(1-\lambda_{n+1})Tx_n, \label{def-Halpern-iteration}
\eeq
where $x,u\in C$ and $(\lambda_n)_{n\ge 1}$ is a sequence in $[0,1]$. 

The following theorem generalizes Wittman's theorem to CAT(0) spaces and was obtained by Saejung \cite{Sae10} (as similar result for the Hilbert ball had 
already been proved in \cite{KopeckaReich}).
 
\bthm\label{thm-Saejung}
Let $C$  be a closed convex subset of a complete {\rm CAT(0)} 
space $X$ and $T:C\to C$ a nonexpansive mapping such that the set $Fix(T)$ of fixed points of $T$ is nonempty. Assume that $(\lambda_n)$ satisfies  (C1), (C2) and (C3). Then for any $u,x\in C$, the iteration $(x_n)$ converges to the projection $Pu$ of $u$ on $Fix(T)$.
\ethm
By \cite[Theorem 18]{Kir03}, $Fix(T)\ne\emptyset$ is guaranteed to hold 
if $C$ is bounded. In this paper we only consider this case and our 
bounds will depend on an upper bound $M$ on the diameter $d_C$ of $C.$ 
However, similar 
to \cite{Koh11}, it is 
not hard to adopt our bounds to the case where the condition $M\ge d_C$ 
is being replaced by $M\ge d(u,p),d(x,p)$ 
for some fixed point $p\in C$ of $T.$ 
\\[1mm]
The main results of the paper are  effective versions of Theorem \ref{thm-Saejung}, obtained by applying proof mining techniques to Saejung`s proof. As this proof is essentially ineffective and -- as we discussed in the introduction -- a computable rate of convergence does not exist, while an effective and
highly uniform rate of metastability (depending only on the input data displayed in Theorems \ref{Halpern-rate}, \ref{Halpern-rate-1}) is guaranteed to exist (via our elimination of Banach limits from the proof) by 
\cite[Theorem 3.7.3]{Koh05} 
(note that the conditions on $\alpha,\beta,\theta$ as well as $T$ are 
all purely universal while the conclusion 
$\exists N \forall m,n\in [N,N+g(N)]\ (d(x_n,x_m)<\eps)$ 
can be written as a purely existential formula and that quantification 
over all $(\lambda_n)$ in $[0,1]$ can be represented as $\forall y\le s$ 
for some simple function $s:\N^2\to\N$).

\bthm \label{Halpern-rate}
Assume that $X$ is a complete {\rm CAT(0)} 
space, $C\se X$ is a  closed bounded convex subset with diameter $d_C$ and  $T:C\to C$ is nonexpansive. Let $(\lambda_n)$ satisfy (C1), (C2) and (C3). 

Then the Halpern iteration $(x_n)$ is Cauchy.\\
Furthermore, let $\alpha$ be a rate of convergence of $(\lambda_n)$, $\beta$ be  a Cauchy modulus of $s_n:=\ds\sum_{i=1}^n|\lambda_{i+1}-\lambda_i|$ and $\theta$ be a rate of divergence
of $\ds\sum_{n=1}^\infty \lambda_{n+1}$.

Then for all $\eps\in(0,2)$ and  $g:\N\to\N$,
\[
\exists N\le \Sigma(\eps,g,M,\theta,\alpha,\beta)\,\,\forall m,n\in[N,N+g(N)]\,\,(d(x_n,x_m)\le \eps),
\]
where 
\beq
\Sigma(\eps,g,M,\theta,\alpha,\beta)= \theta^+\left(\Gamma-1+\left\lceil\ln\left(\frac{12 M^2}{\eps^2}\right)\right\rceil\right)+1
\eeq
with $M\in\Z_+$ such that $M\ge d_C$,
\bua
\eps_0 =\frac{\eps^2}{24(M+1)^2}, & \ds \Gamma =\max \left\{\chi^*_k(\eps^2/12) \mid  \left\lceil\frac{1}{\eps_0}\right\rceil\le k\le \widetilde{f^*}^{(\lceil M^2/\eps_0^2\rceil)}(0)+\left\lceil\frac{1}{\eps_0}\right\rceil\right\},
\eua
\bua
\chi^*_k(\eps)= \tilde{\Phi}\left(\frac{\eps}{4M(\tilde{P}_k\left(\eps/2\right)+1)}\right)+\tilde{P}_k\left(\eps/2\right), & \ds \tilde{P}_k\left(\eps\right)= \left\lceil\frac{12M^2(k+1)}{\eps}\Phi\left(\ds\frac{\eps}{12M(k+1)}\right)\right\rceil,\\
\eua
\bua
\tilde{\Phi}(\eps,M,\theta,\beta)=\theta\left(\beta\left(\frac\eps {4M}\right)+1+\left\lceil\ln\left(\frac{2M}\eps\right)\right\rceil\right)+1, \\
\Phi(\eps,M,\theta,\alpha,\beta) = \max\left\{\tilde{\Phi}\left(\frac{\eps}2,M,\theta,\beta\right),\alpha\left(\frac\eps {4M}\right)\right\}, 
\eua
\bua
\ds \Delta^*_k(\eps,g)=\frac{\eps}{3g_{\eps,k}\left(\Theta_k(\eps)-\chi^*_k(\eps/3)\right)}, & \ds \Theta_k(\eps)=  \theta\left(\chi^*_k\left(\frac{\eps}3\right)-1+\left\lceil\ln\left(\frac{3 M^2}{\eps}\right)\right\rceil\right)+1, \\
g_{\eps,k}(n)=n+g\left(n+\chi^*_k\left(\frac{\eps}3\right)\right), & \theta^+(n)= \max\{\theta(i)\mid i\le n\},
\eua
\bua
f(k) = \max\left\{\left\lceil\frac{M^2}{\Delta^*_k(\eps^2/4,g)}\right\rceil, k\right\}-k,& 
\ds f^*(k) = f\left(k+\left\lceil\frac{1}{\eps_0}\right\rceil\right)+\left\lceil\frac{1}{\eps_0}\right\rceil, & \ds \widetilde{f^*}(k)= k+f^*(k).
\eua
\ethm 
\begin{proof}
See Section \ref{proof-Halpern-rate}.
\end{proof}

A similar result can be obtained by assuming that $(\lambda_n)$ satisfies (C1), (C2) and (C4) with corresponding rates.

\bthm\label{Halpern-rate-1}
Assume that $X$ is a complete {\rm CAT(0)} space, $C\se X$ is a closed bounded convex subset with diameter $d_C$ and  $T:C\to C$ is nonexpansive. Let $(\lambda_n)$ satisfy (C1), (C2), (C4) and $\lambda_n \in (0,1)$ for all $n\ge 2$.

Then the Halpern iteration $(x_n)$ is Cauchy.\\
Furthermore, if $\alpha$ is a rate of convergence of $(\lambda_n)$, $\beta$ is  a Cauchy modulus of $s_n:=\ds\sum_{i=1}^n|\lambda_{i+1}-\lambda_i|$ and $\theta$ is a rate of convergence of $\ds \prod_{n=1}^\infty (1-\lambda_{n+1})$ towards $0$, then for all $\eps\in (0,2)$ and  $g:\N\to\N$,
\bua
\exists N\le \Sigma(\eps,g,M,\theta,\alpha,\beta,(\lambda_n))\,\,\forall m,n\in[N,N+g(N)]\,\,(d(x_n,x_m)\le \eps),
\eua
where 
\beq
\Sigma(\eps,g,M,\theta,\alpha,\beta,(\lambda_n)):= \\max\left\{\Theta_k
(\eps^2/4) \mid \left\lceil\frac{1}{\eps_0}\right\rceil\le k\le \widetilde{f^*}^{(\lceil M^2/\eps_0^2\rceil)}(0)+\left\lceil\frac{1}{\eps_0}\right\rceil\right\},
\eeq
with $M\in\Z_+$ such that $M\ge d_C$,
\[ 0<D\le \prod_{n=1}^{\beta(\eps/4M)}(1-\lambda_{n+1}), \]
\bua
\tilde{\Phi}(\eps,M,\theta,\beta,D)&=& \theta\left(\frac{D\eps}{2M}\right)+1,\\ 
\Phi(\eps,M,\theta,\alpha,\beta,D) & = & \max\left\{\theta\left(\frac{D\eps}{4M}\right)+1,\alpha\left(\frac\eps {4M}\right)\right\}, \\
\Theta_k(\eps) &=& \theta\left(\frac{D_k\eps}{3M^2}\right)+1,\\
0<D_k&\le & \prod_{n=1}^{\chi_k^*(\eps/3)-1} (1-\lambda_{n+1}), 
\eua
and the other constants and functionals being defined as in Theorem \ref{Halpern-rate}.
\ethm
\begin{proof}
We use Proposition \ref{Halpern-rate-as-reg-hyperbolic-1},  Lemma \ref{quant-Aoyama-all-meta-rate-1} and follow the same line as in the proof of Theorem \ref{Halpern-rate}.
\end{proof}

One can modify Theorems \ref{Halpern-rate}, \ref{Halpern-rate-1} so that only metastable versions of $\alpha,\beta$ and $\theta$ are needed. However, we refrain from doing so as the result would be rather unreadable and  in the practical cases at hand -- such as $\ds \lambda_n=\frac 1{n+1}$ -- full rates $\alpha,\beta,\theta$ are easy to compute.

\bcor
Assume that $\ds\lambda_n=\frac 1{n+1}$ for all $n\geq 1$. Then for all $\eps\in (0,1)$ and  $g:\N\to\N$,
\bua
\exists N\le \Sigma(\eps,g,M)\,\,\forall m,n\in[N,N+g(N)]\,\,(d(x_n,x_m)\le \eps),
\eua
where 
\beq
\Sigma(\eps,g,M)= \left\lceil \frac{12M^2(\chi_L^*(\eps^2/12)+1)}{\eps^2}
\right\rceil-1
\eeq
with 
\bua
L& =& \widetilde{f^*}^{(\lceil M^2/\eps_0^2\rceil)}(0)+\left\lceil\frac{1}{\eps_0}\right\rceil, \\
\tilde{P}_k(\eps)&=& \left\lceil \frac{12M^2(k+1)}{\eps}\cdot\left(\left\lceil 
\frac{48M(k+1)}{\eps}+\frac{2304M^4(k+1)^2}{\eps^2}
\right\rceil -1\right)\right\rceil,\\
\chi_k^*(\eps) &=& \left\lceil\frac{8M^2(\tilde{P}_k\left(\eps/2\right)+1)}{\eps}+\frac{128M^4(\tilde{P}_k\left(\eps/2\right)+1)^2}{\eps^2}\right\rceil -1+\tilde{P}_k\left(\eps/2\right),\\
\Theta_k(\eps) &=& \left\lceil \frac{3M^2(\chi_k^*(\eps/3)+1)}{\eps}\right\rceil-1, 
\eua
while the  other constants and functionals are defined as in Theorem \ref{Halpern-rate}.
\ecor
\begin{proof}
Since $\ds \prod_{k=1}^n\left(1-\frac{1}{k+2}\right)=\frac{2}{n+2}$, we get that $\ds \theta(\eps):=\left\lceil\frac{2}{\eps}\right\rceil-2$ is a rate of convergence of $\ds \prod_{n=1}^\infty\left(1-\frac{1}{n+2}\right)$ towards $0$. Furthermore, 
we can take $\ds D_k:=\frac{2}{\chi_k^*(\eps/3)+1}$ in Theorem \ref{Halpern-rate-1}  and -- using 
Corollary \ref{Halpern-rate-as-reg-hyperbolic-1n} -- $\Phi:=\Psi, \tilde{\Phi}
:=\tilde{\Psi}$ from that corollary. We then get $P_k(\eps),\, \chi_k^*(\eps)$ as above and 
\bua
\Theta_k(\eps) &=& \theta\left(\frac{D_k\eps}{3M^2}\right)+1=  \left\lceil \frac{3M^2(\chi_k^*(\eps/3)+1)}{\eps}\right\rceil-1.
\eua
The claim now follows by (the proof of) 
Theorem \ref{Halpern-rate-1} using that $\chi_k^*$ increases with $k$.
\end{proof}

Despite its superficially quite different look, the bound in Corollary 4.4 has an overall similar structure as the bound extracted for the Hilbert space case in \cite{Koh11}: the bound results from applying a certain function $\Theta_k(\eps)$ to a number $k:=L$ which is the result of an iteration of a function $\tilde{f^*}$ (starting at some arbitrary value, e.g. $0$), where $\tilde{f^*}(k)$ is -- disregarding many details -- something close to $\Theta_k(\eps)+g(\Theta_k(\eps)).$  This is also the structure of the bound in \cite[Theorem 3.3]{Koh11} (where
$\Delta^*$ plays the role of $\tilde{f^*}$). Note that the number of 
iterations essentially is $M^6/\eps^4$ while it was roughly $M^4/\eps^4$ in  
the bound in 
\cite[Theorem 3.3]{Koh11}. The main difference, though, is that 
now $\Theta_k$ is significantly more involved compared to \cite{Koh11} 
(most of its terms stemming from the remains of the original 
Banach-limit argument).
\begin{remark} 
\be
\item 
By replacing $(X,d)$ by $(X,d_M)$ with $\ds d_M(x,y):=\frac{1}{M}d(x,y)$ one 
can always arrange that $1\ge d_C$ and then apply the above bounds for 
$1$ instead of $M$ but with $\eps/M$ instead of $\eps$ to compensate for 
this rescaling. One then gets a bound in which $\eps$ and $M$ only occur in 
the form $\eps/M$ and the number of iterations is (essentially) $M^4/\eps^4.$ 
However, in doing so $M$ would enter the bound at many unnecessary places 
as well.
\item 
The assumption on the completeness of $X$ and the closedness of $C$ 
facilitates the proofs but 
is not necessary in the above results. If the results would fail for an 
incomplete $X$ then it is easy to show that they would fail already for 
the metric completion $\widehat{X}$ of $X$ and the closure $\overline{C}$ 
of $C$ in $\widehat{X}$ (since $T$ extends to a 
nonexpansive operator $\widehat{T}:\overline{C}\to\overline{C}$). 
Alternatively, one could use directly appropriate 
approximate fixed points rather than fixed points in 
the applications of Banach's fixed point theorem in section 
\ref{section-Banach} below. 
\item 
Subsequently, our results have been further generalized in 
\cite{Schade} to the case of unbounded $C$ provided that $T$ 
possesses a fixed point $p.$ Then the above bounds hold with 
$M\ge diam(C)$ being replaced by $M\ge 4\max\{ d(u,x),d(u,p)\}.$ 
In  \cite{Schade} our method is also adapted to obtain similar 
bounds for more general schemes of so-called modified Halpern 
iterations.
\ee
\end{remark}

\section{Quantitative lemmas on sequences of real numbers}

The following lemma about sequences of real numbers was proved in \cite{AoyKimTakToy07}.

\blem\label{Aoyama-all-seq-reals}
Let $(s_n)$ be a sequence of nonnegative real numbers, $(\alpha_n)$ be a sequence of real numbers in $[0,1]$
 with $\ds \sum_{n=1}^\infty \alpha_n=\infty$,  and $(t_n)$ be a sequence of real numbers with $\ds \lsupn t_n\le 0$. Suppose that 
\[
s_{n+1}\le (1-\alpha_n)s_n+\alpha_nt_n \quad\text{for all }n\ge 1.
\]
Then $\ds \limn s_n=0$.
\elem

We prove now quantitative versions of Lemma \ref{Aoyama-all-seq-reals}, which also allow for an error term $\Delta$.

\blem\label{quant-Aoyama-all-meta-rate-0}
Let $\eps \in (0,2)$, 
$g:\N\to\N$, $ M\in\Z_+$, $\theta:\Z_+\to\Z_+$ and $\psi:(0,\infty)\to\Z_+$. Define
\bea
\Theta:=\Theta(\eps, M,\theta,\psi) & = & \theta\left(\psi\left(\frac{\eps}3\right)-1+\left\lceil\ln\left(\frac{3 M}{\eps}\right)
\right\rceil\right)+1, \label{def-Theta-quant-lemma-0}\\
\Delta:=\Delta(\eps,g, M,\theta,\psi) &= &  \frac{\eps}{3g_\eps(\Theta-\psi(\eps/3))},
\label{def-Delta-quant-lemma-0}
\eea
where  $g_\eps(n)=n+g(n+\psi(\eps/3))$.\\
Assume that $(\alpha_n)$ is a sequence in $[0,1]$ such that $\ds \sum_{n=1}^\infty \alpha_n=\infty$ with rate of divergence $\theta$. Let $(t_n)$ be a sequence of real numbers satisfying 
\beq
\forall n\ge \psi(\eps/3)\,\, (t_n\le \eps/3).\label{ineq(30)}
\eeq
Let  $(s_n)$ be a bounded sequence with upper bound $ M$ satisfying
\beq
s_{n+1}\le (1-\alpha_n)s_n+\alpha_nt_n+\Delta \quad\quad\text{for all }n\ge 1.\label{sn-recurrence-0}
\eeq
Then 
\[
\forall n\in[\Theta,\Theta+g(\Theta)] \,\,(s_n\le \eps).
\]
\elem
\begin{proof} By induction on $m$ one shows that for all $n\ge \psi(\eps/3)$ and  $m\ge 1$,
\beq
s_{n+m}\leq \left[\prod_{j=n}^{n+m-1}(1-\alpha_j)\right]s_n+\left[1-\prod_{j=n}^{n+m-1}(1-\alpha_j)\right]\frac{\eps}3+m\Delta.
\label{ineq-prod-s-n-Delta-0}
\eeq
$m=1$: By (\ref{sn-recurrence-0}) and (\ref{ineq(30)}), we have that 
\bua
s_{n+1}&\le & (1-\alpha_n)s_n+\alpha_nt_n+\Delta \le (1-\alpha_n)s_n+\alpha_n\frac{\eps}3+\Delta\\
&=& (1-\alpha_n)s_n+(1-(1-\alpha_n))\frac{\eps}3+\Delta.
\eua
$m\Ra m+1$: We have that
\bua
s_{n+m+1} &\le & (1-\alpha_{n+m})s_{n+m}+\alpha_{n+m}t_{n+m}+\Delta\\
&\le & (1-\alpha_{n+m})\left[\prod_{j=n}^{n+m-1}(1-\alpha_j)\right]s_n+(1-\alpha_{n+m})\left[1-\prod_{j=n}^{n+m-1}(1-\alpha_j)\right]\frac{\eps}3+\\
&& +(1-\alpha_{n+m})m\Delta+\alpha_{n+m}t_{n+m}+\Delta \quad\quad\text{by the induction hypothesis}\\
&\le & \left[\prod_{j=n}^{n+m}(1-\alpha_j)\right]s_n+(1-\alpha_{n+m})\left[1-\prod_{j=n}^{n+m-1}(1-\alpha_j)\right]\frac{\eps}3+\alpha_{n+m}\frac{\eps}3+(m+1)\Delta\\
&=& \left[\prod_{j=n}^{n+m}(1-\alpha_j)\right]s_n+\left[1-\alpha_{n+m}-\prod_{j=n}^{n+m}(1-\alpha_j)+\alpha_{n+m}\right]\frac{\eps}3+(m+1)\Delta\\
&= & \left[\prod_{j=n}^{n+m}(1-\alpha_j)\right]s_n+\left[1-\prod_{j=n}^{n+m}(1-\alpha_j)\right]\frac{\eps}3+(m+1)\Delta.
\eua
Using the fact that $1-x\leq \exp(-x)$ for all $x\in[0,\infty)$, we get that 
$$ \ds \prod_{j=n}^{n+m-1}(1-\alpha_j)\le \prod_{j=n}^{n+m-1} \exp(-\alpha_j)=\exp\left(-\sum_{j=n}^{n+m-1}\alpha_j\right),$$ 
hence 
\beq
s_{n+m} \le  \exp\left(-\sum_{j=n}^{n+m-1}\alpha_j\right)s_n+\frac{\eps}3+m\Delta
\le  \exp\left(-\sum_{j=n}^{n+m-1}\alpha_j\right)M+\frac{\eps}3+m\Delta \label{quant-lemma-useful-1-0}
\eeq
for all $n\ge \psi(\eps/3)$ and  $m\ge 1$. 

For simplicity, let us denote $\ds d_{m,n}:= M\exp\left(-\sum_{j=n}^{n+m-1}\alpha_j\right)$.
As in \cite{Leu07}, we get that
\bua
d_{m,n}\le \frac{\eps}3 & \Lra & \exp\left(-\sum_{j=n}^{n+m-1}\alpha_j\right)\leq \frac{\eps}{3 M} \,\,\Lra\,\,  -\sum_{j=n}^{n+m-1}\alpha_j\leq \ln\left(\frac{\eps}{3 M}\right)\\
&\Lra & \sum_{j=n}^{n+m-1}\alpha_j\geq -\ln\left(\frac{\eps}{3 M}\right)=\ln\left(\frac{3 M}{\eps}\right)\,\,\Lra \,\,\sum_{j=1}^{n+m-1}\alpha_j\geq \sum_{j=1}^{n-1}\alpha_j+\ln\left(\frac{3 M}{\eps}\right).
\eua
Let 
\beq
L:=\Theta-\psi(\eps/3)=\theta\left(\psi(\eps/3)-1+\left\lceil\ln\left(\frac{3 M}{\eps}\right)
\right\rceil\right)+1-\psi(\eps/3).
\eeq
Since $\theta$ is a rate of divergence of $\ds\sum_{n=1}^\infty \alpha_n$ and $\alpha_n\leq 1$, it is obvious that $\theta(n)\geq n$ for all $n\ge 1$, hence $L\ge 1$. For all $m\ge L$, we have that 
\bua
\sum_{j=1}^{\psi(\eps/3)+m-1}\alpha_j& \ge & \sum_{j=1}^{\psi(\eps/3)+L-1}\alpha_j\ge \psi(\eps/3)-1+\left\lceil\ln\left(\frac{3 M}{\eps}\right)\right\rceil\ge \sum_{j=1}^{\psi(\eps/3)-1}\alpha_j+\ln\left(\frac{3M}{\eps}\right),
\eua
hence 
\bua
d_{m,\psi(\eps/3)}\le \frac{\eps}{3}\quad\text{for all }m\ge L.
\eua
Apply now (\ref{quant-lemma-useful-1-0}) with $n:=\psi(\eps/3)$ to get that for all $m\ge L$,
\beq
s_{\psi(\eps/3)+m} \le \frac{2\eps}3+m\Delta. \label{useful-s-m-psi-0}
\eeq
Let $n\in[\Theta,\Theta+g(\Theta)]$. Then $$L\le n-\psi(\eps/3)\le \Theta+g(\Theta)-\psi(\eps/3)=L+g(L+\psi(\eps/3))=g_\eps(L),$$ hence we can apply (\ref{useful-s-m-psi-0}) with $m:=n-\psi(\eps/3)$ to get that
\[
s_n \le \frac{2\eps}3+g_\eps(L)\Delta=\eps.
\]
\end{proof}

It is well-known that for a sequence $(\alpha_n)$ 
in $(0,1)$ we have that $\ds \sum_{n=1}^\infty \alpha_n=\infty$ if and only if $\ds \prod_{n=1}^\infty (1-\alpha_n)=0$. This suggests a second quantitative version of Lemma \ref{Aoyama-all-seq-reals}, where, instead of a  rate of divergence for 
$\ds \sum_{n=1}^\infty \alpha_n$, we assume the existence of a rate of convergence of $\ds \prod_{n=1}^\infty (1-\alpha_n)$ towards $0$.

\blem\label{quant-Aoyama-all-meta-rate-1}
Let $\eps>0$, $g:\N\to\N$, $M\in\Z_+$, $D>0$ and $\theta,\psi:(0,\infty)\to\Z_+$. Define
\bea
\Theta:=\Theta(\eps, M,\theta,\psi,D) & = & \max\left\{\theta\left(\frac{D\eps}{3M}\right)+1,\psi\left(\frac{\eps}3\right)\right\} \label{def-Theta-quant-lemma-1}\\
\Delta:=\Delta(\eps,g, M,\theta,\psi,D) &= &  \frac{\eps}{3g_\eps(\Theta-\psi(\eps/3))},
\label{def-Delta-quant-lemma-1}
\eea
where $g_\eps(n)=n+g(n+\psi(\eps/3))$.\\
Assume that $(\alpha_n)$ is a sequence in $(0,1)$ such that $\ds \prod_{n=1}^\infty (1-\alpha_n)=0$ with rate of convergence $\theta$. Let $(t_n)$ be a sequence of real numbers satisfying 
\beq
\forall n\ge \psi(\eps/3)\,\, (t_n\le \eps/3).
\eeq
Assume furthermore that
\beq
D\le \prod_{n=1}^{\psi(\eps/3)-1} (1-\alpha_n). \label{extra-hyp-D-quant-lemma}
\eeq
Let  $(s_n)$ be a bounded sequence with upper bound $M$ satisfying
\beq
s_{n+1}\le (1-\alpha_n)s_n+\alpha_nt_n+\Delta \quad\quad\text{for all }n\ge 1.\label{sn-recurrence-1}
\eeq
Then 
\[
\forall n\in[\Theta,\Theta+g(\Theta)] \,\,(s_n\le \eps).
\]
\elem
\begin{proof}
We shall denote $P_n:=\ds \prod_{j=1}^n(1-\alpha_j)$ for all $n\ge 1$. By convention, $P_0=1$. We get as in the proof of Lemma \ref{quant-Aoyama-all-meta-rate-0} that for all $n\ge \psi(\eps/3)$ and  $m\ge 1$,
\beq
s_{n+m}\leq \left[\prod_{j=n}^{n+m-1}(1-\alpha_j)\right]s_n+\left[1-\prod_{j=n}^{n+m-1}(1-\alpha_j)\right]\frac{\eps}3+m\Delta.
\label{ineq-prod-s-n-Delta-1}
\eeq
Hence, for all $n\ge \psi(\eps/3)$ and   $m\ge 1$,
\bua
s_{n+m} &\le &  \left[\prod_{j=n}^{n+m-1}(1-\alpha_j)\right]s_{n}+\frac{\eps}3+m\Delta
\le  \left[\prod_{j=n}^{n+m-1}(1-\alpha_j)\right]M+\frac{\eps}3+m\Delta\\
&=& \frac{MP_{n+m-1}}{P_{n-1}}+\frac{\eps}3+m\Delta.
\eua
By taking $n:=\psi(\eps/3)$, we get that for all  $m\ge 1$,
\beq
s_{\psi(\eps/3)+m} \le  \frac{MP_{\psi(\eps/3)+m-1}}{P_{\psi(\eps/3)-1}}+\frac{\eps}3+m\Delta. \label{useful-s-m-psi-1}
\eeq
Define now 
\bea
L := \Theta-\psi(\eps/3)=\max\left\{\theta\left(\frac{D\eps}{3M}\right)+1-\psi(\eps/3),0\right\}
\eea
and take $n\in[\Theta,\Theta+g(\Theta)]$ arbitrary. Then $L \le n-\psi(\eps/3)\le g_\eps(L)$ and, applying (\ref{useful-s-m-psi-1}) with $m:=n-\psi(\eps/3)$, it follows that
\bua
s_n & \le & \frac{MP_{n-1}}{P_{\psi(\eps/3)-1}}+\frac{\eps}3+(n-\psi(\eps/3))\Delta
\le \frac{MP_{\Theta-1}}{P_{\psi(\eps/3)-1}}+\frac{\eps}3+g_\eps(L)\Delta\le 
\frac{M}{P_{\psi(\eps/3)-1}}\cdot\frac{D\eps}{3M} +\frac{2\eps}3,
\eua
as $\ds \Theta-1\ge \theta\left(\frac{D\eps}{3M}\right)$. By (\ref{extra-hyp-D-quant-lemma}), we get that $s_n\le \eps$.
\end{proof}

The above lemma turns out to be very useful to get better bounds in the case  $\ds \alpha_n=\frac{1}{n+1}$, as
$\ds \sum_{n=1}^\infty \frac{1}{n+1}$ has an exponential rate of divergence, while $\ds \prod_{n=1}^\infty \left(1-\frac{1}{n+1}\right)$ has a linear rate of convergence towards $0$.

\bcor\label{quant-Aoyama-1n}
Let $\eps\in(0,3)$, $g:\N\to\N$, $M\in\Z_+$, $\psi:(0,\infty)\to\Z_+$. Define
\bea
\Theta:=\Theta(\eps, M,\psi)=\left\lceil\frac{3M\psi(\eps/3)}{\eps}\right\rceil+1, \quad \Delta:=\Delta(\eps,g, M,\psi) =  \frac{\eps}{3g_\eps(\Theta-\psi(\eps/3))},
\eea
where $g_\eps(n)=n+g(n+\psi(\eps/3))$.\\
Assume that $(t_n)$ is a sequence of real numbers satisfying 
\beq
\forall n\ge \psi(\eps/3)\,\, (t_n\le \eps/3).
\eeq
Let  $(s_n)$ be a bounded sequence with upper bound $M$ satisfying
\beq
s_{n+1}\le \left(1-\frac{1}{n+1}\right)s_n+\frac{1}{n+1}t_n+\Delta \quad\quad\text{for all }n\ge 1.
\eeq
Then 
\beq
\forall n\in[\Theta,\Theta+g(\Theta)] \,\,(s_n\le \eps).
\eeq
\ecor
\begin{proof}
Remark that for all $n\ge 1$, we have that $\ds \prod_{k=1}^n\left(1-\frac{1}{k+1}\right)=\frac{1}{n+1}$, hence $\ds \theta(\eps):=\left\lceil\frac{1}{\eps}\right\rceil$ 
is a rate of convergence of $\ds \prod_{n=1}^\infty \left(1-\frac{1}{n+1}\right)$ towards $0$. Furthermore, we can take $\ds D:=\frac{1}{\psi(\eps/3)}$ in Lemma \ref{quant-Aoyama-all-meta-rate-1}. Since $\eps\in(0,3)$, we have that $\ds \frac{3M\psi(\eps/3)}{\eps}\ge \psi(\eps/3)$, hence $\ds\left\lceil\frac{3M\psi(\eps/3)}{\eps}\right\rceil+1> \psi(\eps/3)$.
\end{proof}

The proof of Lemmas \ref{quant-Aoyama-all-meta-rate-0}, \ref{quant-Aoyama-all-meta-rate-1} can actually be reformulated to give a full rate of convergence for $(s_n)$ provided that one does not have the error term $\Delta$ or that $\Delta$ can be made arbitrarily small while still 
keeping $\psi$ and (\ref{ineq(30)}) unchanged (note that $\Theta$ -- in contrast to $\Delta$ -- does not depend on $g$). This error term stems from the fact that we have to eliminate a use of an ineffective arithmetical comprehension hidden in forming the limit $z$ of a certain sequence of points $(z_{t_k})$ which is used in Saejung's proof to construct the sequence 
which plays the role of $(t_n)$ in the use of Lemma \ref{quant-Aoyama-all-meta-rate-0} or Lemma 
\ref{quant-Aoyama-all-meta-rate-1} (see \cite[(2.21)-(2.23)]{Sae10}). 
Instead of $z,$ we take $z_{t_k}$ where $k$ is 
sufficiently large so that $d(z_{t_j},z)< \eps$ for all $j\ge k.$ 
This error can be incorporated (also when switching from 
$z_{t_k}$ to $z_{t_j}$ for $j\ge k$) 
into the error already present in (\ref{ineq(30)}) with some $\psi_k$ depending 
on $k$ but it adds the error $\Delta_j:=M^2t_j$ (see (\ref{d2xn+1-zt-v3}) below compared to 
\cite[(2.21)]{Sae10}), which we provided for in (\ref{sn-recurrence-0}). The error $\Delta_j,$  
however, can be made arbitrarily small by increasing $j$ without changing $\psi_k$ in (\ref{ineq(30)}) (see the proof of the main Theorem \ref{Halpern-rate}).  
This would give us a rate of convergence in our Theorem \ref{Halpern-rate} provided that 
we had a Cauchy rate on $(z_{t_k}).$ However, we effectively only get a rate of metastability 
for this sequence (see Proposition \ref{quant-Browder-Halpern} and the discussion 
preceding this proposition). As a result, $k$ and in turn $\psi_k$ become dependent on the counterfunction $g.$ This has the consequence that now, via $\psi_k$, also $\Theta$ in our application of Lemma \ref{quant-Aoyama-all-meta-rate-0} (in the proof of Theorem \ref{Halpern-rate}) becomes dependent on $g.$ It is this issue which is responsible for the fact that we only get an effective rate of metastability in Theorem \ref{Halpern-rate} 
(rather than a Cauchy rate), which -- as discussed in the introduction -- in fact is 
best possible.    

The following quantitative lemma is  the main ingredient in getting effective rates of asymptotic regularity for the Halpern iteration.

\begin{lemma}\label{quant-liu} 
Let $(\lambda_n)_{n\ge 1}$ be a sequence in $[0,1]$ and $(a_n)_{n\geq 1},(b_n)_{n\geq 1}$ be sequences in $\R_+$  such that for all $\ds n\ge 1$,
\beq
a_{n+1}\leq (1-\lambda_{n+1}) a_n + b_n.
\eeq
Assume that $\ds \sum_{n=1}^\infty b_n$ is convergent and $\gamma$ is a Cauchy modulus of $\ds s_n:=\ds\sum_{i=1}^nb_i$. 
\be
\item\label{quant-liu-1} If  $\ds \sum_{n=1}^\infty \lambda_{n+1}$ is divergent with rate of divergence $\theta$, then
\[
 \forall \eps\in(0,2)\, \forall n\ge
\Phi\,\,  \left(a_n \le \eps\right),
\]
where 
\beq
\Phi:=\Phi(\eps,M,\theta,\gamma)=\theta\left(\gamma\left(\frac\eps 2\right)+1+\left\lceil\ln\left(\frac{2M}\eps\right)\right\rceil\right)+1.  \label{def-Phi-quant-liu-1}
\eeq
and $M\in\Z_+$ is an upper bound on $(a_n)$.
\item\label{quant-liu-2}  If $\lambda_n\in (0,1)$ for all $n\ge 2$ and $\ds \prod_{n=1}^\infty (1-\lambda_{n+1})=0$ with rate of convergence $\theta$, then
\[
\forall \eps\in(0,2)\, \forall n\ge
\Phi\,\,  \left(a_n \le \eps\right), \label{conclusion-quant-liu-2}
\]
where 
\beq
\Phi:=\Phi(\eps,M,\theta,\gamma,D)=\theta\left(\frac{D\eps}{2M}\right)+1,
\eeq
$M\in\Z_+$ is an upper bound on $(a_n)$, and 
\beq
0<D\le \prod_{n=1}^{\gamma(\eps/2)} (1-\lambda_{n+1}).
\eeq
\ee
\end{lemma}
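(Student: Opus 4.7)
The plan is to first unwind the recurrence by a straightforward induction on $k\ge 1$ to obtain the estimate
\[
a_{n+k}\;\le\;\Bigl[\prod_{j=1}^{k}(1-\lambda_{n+j})\Bigr]\,a_n\,+\,\sum_{i=0}^{k-1}b_{n+i},
\]
valid for every $n\ge 1$. The induction is immediate: the base case $k=1$ is just the hypothesis, and the induction step uses $(1-\lambda_{n+k+1})\in[0,1]$ together with $b_i\ge 0$ to absorb the factor $(1-\lambda_{n+k+1})$ in front of the previous sum (this is where the nonnegativity of $(b_n)$ is used).

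Next I would set $N:=\gamma(\eps/2)+1$ and apply the Cauchy modulus: since $s_n=\sum_{i=1}^n b_i$ has Cauchy modulus $\gamma$, we get
\[
\sum_{i=N}^{N+k-1}b_i \;=\; s_{\gamma(\eps/2)+k}-s_{\gamma(\eps/2)} \;\le\; \frac{\eps}{2}\qquad\text{for all }k\ge 0.
\]
Combining this with the boundedness $a_N\le M$ yields, for all $k\ge 0$,
\[
a_{N+k} \;\le\; M\cdot\!\!\prod_{j=N+1}^{N+k}\!\!(1-\lambda_j)\,+\,\frac{\eps}{2}.
\]
Thus it suffices to bound the product by $\eps/(2M)$ once $N+k\ge \Phi$.

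For part \ref{quant-liu-1}, I would use $1-x\le e^{-x}$ on $[0,\infty)$ to estimate
\[
\prod_{j=N+1}^{N+k}(1-\lambda_j)\;\le\;\exp\!\Bigl(-\sum_{j=N+1}^{N+k}\lambda_j\Bigr),
\]
so the desired bound $\eps/(2M)$ reduces to $\sum_{j=N+1}^{N+k}\lambda_j\ge \ln(2M/\eps)$. Since $\lambda_j\le 1$ gives the trivial upper bound $\sum_{j=2}^{N}\lambda_j\le N-1=\gamma(\eps/2)$, the rate of divergence $\theta$ of $\sum\lambda_{j+1}$ forces this inequality as soon as $n=N+k\ge \theta\bigl(\gamma(\eps/2)+\lceil\ln(2M/\eps)\rceil\bigr)+1$, which is ensured by $n\ge \Phi=\theta\bigl(\gamma(\eps/2)+1+\lceil\ln(2M/\eps)\rceil\bigr)+1$. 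For part \ref{quant-liu-2}, I would instead split the product as
\[
\prod_{j=N+1}^{N+k}(1-\lambda_j)\;=\;\frac{\prod_{j=2}^{N+k}(1-\lambda_j)}{\prod_{j=2}^{N}(1-\lambda_j)},
\]
and use the assumption $D\le \prod_{n=1}^{\gamma(\eps/2)}(1-\lambda_{n+1})=\prod_{j=2}^{N}(1-\lambda_j)$ for the denominator, while the rate of convergence $\theta$ of $\prod(1-\lambda_{n+1})$ to $0$ bounds the numerator by $D\eps/(2M)$ as soon as $n\ge \theta(D\eps/(2M))+1=\Phi$. In both cases, plugging in gives $a_n\le \eps/2+\eps/2=\eps$.

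No substantial obstacle is expected: the argument is purely elementary quantitative book-keeping. The one mildly delicate point is matching the indices between the Cauchy-modulus bound and the rate of divergence (respectively convergence) so that the stated formula for $\Phi$ appears cleanly; this motivates the choice $N=\gamma(\eps/2)+1$ rather than $\gamma(\eps/2)$, and accounts for the extra $+1$ inside the outer application of $\theta$ in part \ref{quant-liu-1}.
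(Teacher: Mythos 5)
Your proposal takes essentially the same approach as the paper (which for part \ref{quant-liu-1} simply cites an earlier proof in the references, and for part \ref{quant-liu-2} gives exactly your argument: unwind the recurrence, split at $N=\gamma(\eps/2)+1$, use the Cauchy modulus to bound the sum $\sum_{i=N}^{N+k-1}b_i$ by $\eps/2$, then bound the leading product term). One small point of care in part \ref{quant-liu-1}: your argument establishes the threshold $\theta\bigl(\gamma(\eps/2)+\lceil\ln(2M/\eps)\rceil\bigr)+1$, and you then assert it is ``ensured by'' the stated $\Phi=\theta\bigl(\gamma(\eps/2)+1+\lceil\ln(2M/\eps)\rceil\bigr)+1$; as written this quietly assumes $\theta$ is monotone, which the definition of rate of divergence does not guarantee. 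The fix is immediate — run the same computation directly with $m':=\gamma(\eps/2)+1+\lceil\ln(2M/\eps)\rceil$, so that for $n\ge\theta(m')+1$ one has $\sum_{j=N+1}^{n}\lambda_j\ge m'-\gamma(\eps/2)=1+\lceil\ln(2M/\eps)\rceil>\ln(2M/\eps)$ — but the justification you offer is slightly misstated.
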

\begin{proof}
\be
\item Follow the proof of  \cite[Lemma 9]{Leu07}.
\item  The proof of (ii) is basically contained in the proof of  \cite[Lemma 9]{Leu07}. For sake of completeness we give it here. We denote $P_n:=\ds \prod_{k=1}^n(1-\lambda_{k+1})$ for all $n\ge 1$. 
Let $\eps\in(0,2)$ and define 
\beq
N:=\gamma\left(\frac\eps 2\right)+1. \label{def-N}
\eeq
Applying \cite[Lemma 8]{Leu07} with $n:=N$, it follows that for all $m\ge 1$,
\bua
a_{N+m}&\leq & \left[\prod_{j=N}^{N+m-1}(1-\lambda_{j+1})\right]a_N+\sum_{j=N}^{N+m-1}b_j=
\frac{P_{N+m-1}}{P_{N-1}}\cdot a_N+\left(s_{\gamma\left(\frac\eps 2\right)+m}-s_{\gamma\left(\frac\eps 2\right)}\right)\\
&\leq & \frac{MP_{N+m-1}}{P_{N-1}}+\frac\eps 2.
\eua
Let 
\beq
L:=\Phi-N=\theta\left(\frac{D\eps}{2M}\right)+1-N.
\eeq
Then for all $m\ge L$, we have that $\ds N+m-1\ge \theta\left(\frac{D\eps}{2M}\right)$, hence
\bua
\frac{MP_{N+m-1}}{P_{N-1}} &\le & \frac{D\eps}{2P_{N-1}}\le \frac{\eps}2.
\eua

This also implies that $L\ge 1$ since, otherwise, 
\[ 1\le M\le \frac{MP_{N+L-1}}{P_{N-1}}\le \frac{\eps}{2}\] 
contradicting $\varepsilon\in (0,2).$ Hence the lemma follows.
\ee
\end{proof}

\section{Effective rates of asymptotic regularity}\label{effective-as-reg} 

The first step towards proving the convergence of the Halpern iterations is to obtain the so-called `asymptotic regularity' and this can be done in the very general setting of $W$-hyperbolic spaces.

Asymptotic regularity is a very important concept in metric fixed-point theory, formally introduced by Browder and Petryshyn in \cite{BroPet66}. A mapping $T$ of a metric space $(X,d)$ into itself is said to be asymptotically regular if $\displaystyle\limn d(x_n,Tx_n)=0$ for all $x\in X$, where $x_n:=T^nx$ is the Picard iteration starting with $x$. We shall say that a  sequence $(y_n)$  in $X$ is \defnterm{asymptotically regular} if $\limn d(y_n,Ty_n)=0$.  A rate of convergence of $(d(y_n,Ty_n))_n$ 
towards $0$ will be called a rate of asymptotic regularity.

The following two propositions provide effective rates of asymptotic regularity for the Halpern iteration. 
Proposition  \ref{Halpern-rate-as-reg-hyperbolic} generalizes to W-hyperbolic spaces a result obtained by the second author for Banach spaces \cite{Leu07}. Proposition \ref{Halpern-rate-as-reg-hyperbolic-1} is new even for the case of Banach spaces.

Let $(X,d,W)$ be a W-hyperbolic space,  $C\se X$ be a  bounded convex subset with diameter $d_C$, $T:C\to C$ be nonexpansive and $(x_n)$ given by (\ref{def-Halpern-iteration}).

\bprop\label{Halpern-rate-as-reg-hyperbolic}
 Assume that $(\lambda_n)$ satisfies (C1), (C2) and (C3). Then $(x_n)$ is asymptotically regular and $\limn d(x_n,x_{n+1})=0$. \\
Furthermore, if $\alpha$ is a rate of convergence of $(\lambda_n)$, $\beta$ is  a Cauchy modulus of $s_n:=\ds\sum_{i=1}^n|\lambda_{i+1}-\lambda_i|$ and $\theta$ is a rate of divergence
of $\ds\sum_{n=1}^\infty \lambda_{n+1}$, then for all $\eps\in (0,2)$, 
\bua
\forall n\ge \tilde{\Phi}\,\, \left(d(x_n,x_{n+1})\leq \eps \right) \quad\text{and}\quad
\forall n\ge \Phi \,\, \left(d(x_n,Tx_n)\leq \eps \right),
\eua
where 
\bea
\tilde{\Phi}:=\tilde{\Phi}(\eps,M,\theta,\beta)&:=& \theta\left(\beta\left(\frac\eps {4M}\right)+1+\left\lceil\ln\left(\frac{2M}\eps\right)\right\rceil\right)+1,  \label{def-tilde-Phi-Halpern}\\ 
\Phi:=\Phi(\eps,M,\theta,\alpha,\beta) & = & \max\left\{\theta\left(\beta\left(\frac\eps {8M}\right)+1+\left\lceil\ln\left(\frac{4M}\eps\right)\right\rceil\right)+1,\alpha\left(\frac\eps {4M}\right)\right\}, \label{def-Phi-Halpern}
\eea
 with $M\in\Z_+$ such that $M\ge d_C$.
\eprop
\begin{proof}
See Section \ref{proof-asymptotic-regularity}.
\end{proof}

Thus, we obtain an effective rate of asymptotic regularity $\Phi(\eps,M,\theta,\alpha,\beta)$  which depends only on the error $\varepsilon$, on an upper bound $M$ on the diameter $d_C$ of $C$, and on $(\lambda_n)$ via $\alpha,\beta,\theta$.  In particular, the  rate $\Phi$ does not depend on $u,x$ or $T$, so 
Proposition 
\ref{Halpern-rate-as-reg-hyperbolic} provides a quantitative version of the 
main theorem in \cite{AoyEshTak07}. Note that what is called `property I'
and `property S' in \cite{AoyEshTak07} has been studied under the name of 
`axioms (W2) and (W4)' in \cite{Koh05}.

\bprop\label{Halpern-rate-as-reg-hyperbolic-1}
Assume that $\lambda_n\in (0,1)$ for all $n\ge 2$ and that $(\lambda_n)$ satisfies (C1), (C2) and (C4). Then $(x_n)$ is asymptotically regular and $\limn d(x_n,x_{n+1})=0$. \\
Furthermore, if $\alpha$ is a rate of convergence of $(\lambda_n)$, $\beta$ is  a Cauchy modulus of $s_n:=\ds\sum_{i=1}^n|\lambda_{i+1}-\lambda_i|$ and $\theta$ is a rate of convergence of $\ds \prod_{n=1}^\infty (1-\lambda_{n+1})=0$ towards $0$, then for all $\eps\in (0,2)$, 
\bua
\forall n\ge \tilde{\Phi}\,\, \left(d(x_n,x_{n+1})\leq \eps \right) \quad\text{and}\quad
\forall n\ge \Phi \,\, \left(d(x_n,Tx_n)\leq \eps \right),
\eua
where 
\bea
\tilde{\Phi}(\eps,M,\theta,\beta,D)&:=& \theta\left(\frac{D\eps}{2M}\right)+1,
\label{def-tilde-Phi-Halpern-1}\\ 
\Phi(\eps,M,\theta,\alpha,\beta,D) & = & \max\left\{\theta\left(\frac{D\eps}{4M}\right)+1,\alpha\left(\frac\eps {4M}\right)\right\}, \label{def-Phi-Halpern-1}
\eea
with $M\in\Z_+$ such that $M\ge d_C$ and $0<D\le \ds \prod_{n=1}^{\beta(\eps/4M)}(1-\lambda_{n+1})$.
\eprop
\begin{proof}
Follow the proof of Proposition \ref{Halpern-rate-as-reg-hyperbolic}, applying Lemma \ref{quant-liu}.(\ref{quant-liu-2}) instead of Lemma \ref{quant-liu}.(\ref{quant-liu-1}).
\end{proof}

That we even get full rates of convergence in Propositions \ref{Halpern-rate-as-reg-hyperbolic}, \ref{Halpern-rate-as-reg-hyperbolic-1} is due to the fact that the original proof of asymptotic regularity is essentially constructive. For such proofs, the requirement of the statement to be proved to have the form $\forall x\exists y\,A_{qf}(x,y)$ with quantifier-free $A_{qf}$, which is crucial for ineffective proofs, is not needed (note that the Cauchy
property is a $\forall\exists\forall$-statement). This is because we do not have to preprocess the proof using some negative translation (which maps proofs with classical logic into ones with constructive logic only) and can directly apply proof-theoretic techniques such as (an appropriate monotone form of) Kreisel's so-called modified realizability interpretation. Logical metatheorems covering such situations are proved in \cite{GerKoh06}. As a consequence of getting full rates of convergence in Propositions \ref{Halpern-rate-as-reg-hyperbolic}, \ref{Halpern-rate-as-reg-hyperbolic-1} one then also has to strengthen the premises on the convergence of $(\lambda_n)$ and $\sum\limits^{\infty}_{n=1} |\lambda_{n+1}-\lambda_n|$ by full rates of convergence $\alpha,\beta.$ If we would interpret the proof as an ineffective one using the metatheorems from \cite{Koh05}, then one would only get a rate of metastability in the conclusion but also would
only need rates of metastability for these premises (note that $\sum\limits^{\infty}_{n=1} \lambda_n =\infty$ is a $\forall\exists$-statement so that there is no difference here between a
full rate and a rate of metastability).

As an immediate consequence of Proposition \ref{Halpern-rate-as-reg-hyperbolic-1}, for $\ds \lambda_n=\frac 1{n+1}$ we get a quadratic (in $1/\eps$) rate of asymptotic regularity.  For Banach spaces, this rate of asymptotic regularity was obtained by the first author in \cite{Koh11}. In \cite{Leu07}, the second author obtained an exponential rate of asymptotic regularity due to the fact that he used the version for Banach spaces of Proposition \ref{Halpern-rate-as-reg-hyperbolic}, which needs a rate of divergence of $\ds \sum_{n=1}^\infty \frac 1{n+1}$.

\bcor\label{Halpern-rate-as-reg-hyperbolic-1n}
Assume that $\ds\lambda_n=\frac{1}{n+1}$ for all $n\geq 1$. Then 
for all $\eps\in (0,1)$, 
\bea
\forall n\ge \tilde{\Psi}(\eps,M)\,\, \left(d(x_n,x_{n+1})\leq \eps \right) \quad\text{and}\quad
\forall n\ge \Psi(\eps,M) \,\, \left(d(x_n,Tx_n)\leq \eps \right),
\eea
where 
\bea
\tilde{\Psi}(\eps,M):=  \left\lceil \frac{2M}{\eps}+\frac{8M^2}{\eps^2}\right\rceil - 1\quad\text{and}\quad
\Psi(\eps,M) :=  \left\lceil \frac{4M}{\eps}+\frac{16M^2}{\eps^2}\right\rceil-1,
\eea
with $M\in\Z_+$ such that $M\ge d_C$.
\ecor
\begin{proof}
Obviously, $\ds\lim_{n\to\infty}\frac{1}{n+1}=0$ with a rate of convergence $\ds \alpha(\eps)=\left\lceil\frac{1}\eps\right\rceil-1\ge 1$. As we have already seen, $\ds \theta(\eps):=\left\lceil\frac{2}\eps\right\rceil-2$ is a rate of convergence of $\ds \prod_{n=1}^\infty\left(1-\frac{1}{n+2}\right)$ towards $0$.
Furthermore, 
\bua
s_n:=\sum_{k=1}^n\left|\frac 1{k+2}-\frac 1{k+1}\right|=\frac12-\frac 1{n+2}.
\eua
It follows easily that $\limn s_n=1/2$ with Cauchy modulus $\ds \beta(\eps):=\begin{cases}\ds \left\lceil 1/\eps\right\rceil-1& \ds \text{if } \eps\ge 1/2\\
\ds \left\lceil 1/\eps\right\rceil-2 & \ds \text{if } \eps < 1/2\end{cases}$. 
 
Finally, $\ds \prod_{n=1}^{\beta(\eps/4M)}\left(1-\frac1{n+2}\right)=\frac{2}{\lceil 4M/\eps\rceil}
$, as $\ds \frac{\eps}{4M}<\frac 12$, so we can take $\ds D:=\frac{2}{\lceil 4M/\eps\rceil}$. Apply now Proposition \ref{Halpern-rate-as-reg-hyperbolic-1} and use the fact that $\lceil x\rceil \le x+1$ to get the result.
\end{proof}

\section{Proof of Proposition \ref{Halpern-rate-as-reg-hyperbolic}}\label{proof-asymptotic-regularity}

The following lemma collects some useful properties of Halpern iterations that hold for unbounded $C$ too.

\begin{lemma}\label{lemma-Halpern-hyp}
Assume that $(x_n)$ is the Halpern iteration starting with $x\in C$. Then
\be
\item For all $n\geq 0$,
\beq
d(x_{n+1},Tx_n) = \lambda_{n+1}d(Tx_n,u)\quad\text{and}\quad d(x_{n+1},u)=(1-\lambda_{n+1})d(Tx_n,u).
\eeq
\item For all $n\geq 0$,
\bea
d(Tx_n,u)&\leq & d(u,Tu)+d(x_n,u), \label{H-Txn-u}\\
d(x_n,Tx_n) &\leq & d(x_{n+1},x_n)+ \lambda_{n+1}d(Tx_n,u), \label{H-Txn-xn}\\
d(x_{n+1},u) &\leq & (1-\lambda_{n+1})\big(d(u,Tu)+d(x_n,u)\big), \label{H-xn+1-u}\\
d(x_{n+1},x_n) &\leq & \lambda_{n+1}d(x_n,u)+(1-\lambda_{n+1})d(Tx_n,x_n). \label{H-xn+1-xn-ineq2}
\eea
\item For all $n\geq 1$,
\bea
d(x_{n+1},x_n) &\leq &  (1-\lambda_{n+1})d(x_n,x_{n-1})+|\lambda_{n+1}-\lambda_n|\,d(u,Tx_{n-1}) \label{H-xn+1-xn-ineq1}.
\eea
\item  If $(x_n)$ is bounded, then  $(Tx_n)$ is also bounded. Moreover, if $M\geq  d(u,Tu)$ and $M\ge d(x_n,u)$ for all $n\ge 0$,
\bea
d(x_n,Tx_n) &\leq &  d(x_{n+1},x_n) +2M\lambda_{n+1} \text{ and}\label{H-ineq-Txn-xn}\\
d(x_{n+1},x_n) & \leq &  (1-\lambda_{n+1})d(x_n,x_{n-1})+2M|\lambda_{n+1}-\lambda_n| \label{H-ineq-xn+1-xn}
\eea
for all $n\geq 1$.
\ee 
\end{lemma}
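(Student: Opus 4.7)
The plan is to treat the five statements in order, each following from the $W$-hyperbolic axioms (W1)--(W4), the basic identities (\ref{prop-xylambda}), nonexpansiveness of $T$, and the triangle inequality. None of the steps should pose a genuine obstacle; the only care needed is bookkeeping the convention that $(1-\lambda)x\oplus \lambda y = W(x,y,\lambda)$, which, applied to $x_{n+1}=\lambda_{n+1} u\oplus (1-\lambda_{n+1})Tx_n$, means reading this as $W(u,Tx_n,1-\lambda_{n+1})$.

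For (i), I would simply apply (\ref{prop-xylambda}) to the point $x_{n+1}=W(u,Tx_n,1-\lambda_{n+1})$: the distance to $u$ picks up the coefficient $1-\lambda_{n+1}$ and the distance to $Tx_n$ picks up the coefficient $\lambda_{n+1}$, giving both identities at once. For (ii), the bound (\ref{H-Txn-u}) is the triangle inequality combined with nonexpansiveness of $T$ (write $d(Tx_n,u)\le d(Tx_n,Tu)+d(Tu,u)$); inequality (\ref{H-Txn-xn}) is the triangle inequality $d(x_n,Tx_n)\le d(x_n,x_{n+1})+d(x_{n+1},Tx_n)$ followed by (i); inequality (\ref{H-xn+1-u}) is (i) combined with (\ref{H-Txn-u}); and inequality (\ref{H-xn+1-xn-ineq2}) is axiom (W1) applied with $z=x_n$, giving $d(x_n,x_{n+1})\le \lambda_{n+1} d(x_n,u)+(1-\lambda_{n+1})d(x_n,Tx_n)$.

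For (iii), I would compare $x_{n+1}=W(u,Tx_n,1-\lambda_{n+1})$ with $x_n=W(u,Tx_{n-1},1-\lambda_n)$ via an intermediate point $W(u,Tx_{n-1},1-\lambda_{n+1})$: axiom (W4) bounds the first leg by $(1-\lambda_{n+1})d(Tx_n,Tx_{n-1})\le (1-\lambda_{n+1})d(x_n,x_{n-1})$ using nonexpansiveness, while axiom (W2) bounds the second leg by exactly $|\lambda_{n+1}-\lambda_n|\,d(u,Tx_{n-1})$. Adding these gives (\ref{H-xn+1-xn-ineq1}).

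Finally, for (iv), boundedness of $(Tx_n)$ is immediate from (\ref{H-Txn-u}) with any constant bounding $(x_n)$; and once $M$ bounds both $d(u,Tu)$ and $d(x_n,u)$ uniformly in $n$, the estimate $d(Tx_n,u)\le 2M$ from (\ref{H-Txn-u}) and $d(u,Tx_{n-1})\le 2M$ can be plugged into (\ref{H-Txn-xn}) and (\ref{H-xn+1-xn-ineq1}) respectively to obtain (\ref{H-ineq-Txn-xn}) and (\ref{H-ineq-xn+1-xn}). The entire lemma is therefore a direct bookkeeping exercise in the $W$-hyperbolic axioms, and I do not expect any step to be an obstacle.
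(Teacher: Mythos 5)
Your proposal is correct and matches the paper's proof step for step: (i) from the identities~(\ref{prop-xylambda}), (ii) via the triangle inequality, nonexpansiveness, and (W1), (iii) via the same intermediate point $\lambda_{n+1}u\oplus(1-\lambda_{n+1})Tx_{n-1}$ using (W4) then (W2), and (iv) by substituting the bound $2M$ into (\ref{H-Txn-xn}) and (\ref{H-xn+1-xn-ineq1}).
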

\begin{proof}
\be
\item By (\ref{prop-xylambda}).
\item
\bua
d(Tx_n,u)&\leq & d(u,Tu)+d(Tu,Tx_n) \leq d(u,Tu)+d(x_n,u),\\
d(x_n,Tx_n) &\leq & d(x_{n+1},x_n)+d(Tx_n,x_{n+1})=d(x_{n+1},x_n)+ \lambda_{n+1}d(Tx_n,u)\\
d(x_{n+1},u) &= & (1-\lambda_{n+1})d(Tx_n,u) \le (1-\lambda_{n+1})\big(d(u,Tu)+d(x_n,u)\big)\\
d(x_{n+1},x_n) &\le & \lambda_{n+1}d(x_n,u)+(1-\lambda_{n+1})d(x_n,Tx_n) \quad \text{~by (W1)}.
\eua
\item 
\bua
d(x_{n+1},x_n) &=& d(\lambda_{n+1}u\oplus (1-\lambda_{n+1})Tx_n,\lambda_nu\oplus(1-\lambda_n)Tx_{n-1})\\
&\leq & d(\lambda_{n\!+\!1}u\oplus (1\!-\!\lambda_{n\!+\!1})Tx_n,\lambda_{n\!+\!1}u\oplus(1\!-\!\lambda_{n\!+\!1})Tx_{n\!-\!1})\!\\
&& + d(\lambda_{n+1}u\oplus(1-\lambda_{n+1})Tx_{n-1},\lambda_nu\oplus(1-\lambda_n)Tx_{n-1})\\
&\leq &  (1-\lambda_{n+1})d(Tx_n,Tx_{n-1})+ |\lambda_{n+1}-\lambda_n|d(u,Tx_{n-1})\\
&&\text{by (W4) and (W2)}\\
&\leq &  (1-\lambda_{n+1})d(x_n,x_{n-1})+|\lambda_{n+1}-\lambda_n|d(u,Tx_{n-1}).
\eua
\item is an easy consequence of (ii), (iii).
\ee
\end{proof}

In the following, we give the proof of Proposition \ref{Halpern-rate-as-reg-hyperbolic}.

Let us consider the sequences
\[a_n:=d(x_n,x_{n-1}),  \quad b_n:=2M|\lambda_{n+1}-\lambda_n|\]
By (\ref{H-ineq-xn+1-xn}), we get that 
\[a_{n+1}\leq (1-\lambda_{n+1})a_n+b_n \quad \text{for all~~}n\geq 1.\]
Moreover, $\ds\sum_{n=1}^\infty \lambda_{n+1}$ is divergent with rate of divergence $\theta$ and it is easy to see that 
\[\gamma:(0,\infty)\to \Z_+, \quad \gamma(\eps):=\beta\left(\frac\eps{2M}\right)\]
is a Cauchy modulus of $s_n:=\ds\sum_{i=1}^nb_i$.

Thus, the hypotheses of Lemma \ref{quant-liu}.(\ref{quant-liu-1}) are satisfied, so we can apply it to get that for all $\eps\in(0,2)$ and for all $n\geq \tilde{\Phi}(\eps,M,\theta,\beta)$
\beq
d(x_n,x_{n-1})\le \eps,\label{Halpern-final-1}
\eeq
where 
\bua
\tilde{\Phi}(\eps,M,\theta,\beta)&:=& \theta\left(\beta\left(\frac\eps {4M}\right)+1+\left\lceil\ln\left(\frac{2M}\eps\right)\right\rceil\right)+1.
\eua
By (\ref{H-ineq-Txn-xn}), for all $n\geq 2$,
\beq
d(x_{n-1},Tx_{n-1}) \leq  d(x_n,x_{n-1})+2M\lambda_n. \label{Halpern-final-0}
\eeq
Since  $\alpha$ is a rate of convergence of $(\lambda_n)$ towards $0$, we get that 
\beq
2M\lambda_{n}\le \frac\eps 2 \quad\text{for all } n\geq \alpha\left(\frac\eps {4M}\right).\label{Halpern-final-2}
\eeq
Combining (\ref{Halpern-final-1}), (\ref{Halpern-final-0}) and (\ref{Halpern-final-2}) it follows that 
\[d(x_{n-1},Tx_{n-1})\le \eps\]
for all $n\geq \ds \max\left\{\tilde{\Phi}\left(\frac{\eps}2,M,\theta,\beta\right),\alpha\left(\frac\eps {4M}\right)\right\}$, so the conclusion of the theorem follows.

\section{Elimination of Banach limits}\label{elim-Ban-lim}

Let us recall that a \defnterm{Banach limit} \cite{Ban32} is a linear functional $\mu: \ell^\infty\to \R$ satisfying the following properties: 
\be
\item $\mu((x_n))\ge 0$ if  $x_n\ge 0$ for all $n\ge 0$;
\item $\mu(\mathbf{1})=1$;
\item $\mu((x_n))=\mu((x_{n+1}))$. 
\ee
Here $\mathbf{1}$ is the sequence $(1,1,\ldots )$ and $(x_{n+1})$ is the sequence $(x_1,x_2,\ldots)$. 

As we have already said, to prove the existence of Banach limits one needs the axiom of choice (see, e.g., \cite{Suc67}). Banach limits are mainly used in Saejung's convergence proof to get the following: 

\blem\label{lemma-Banach-limits-Saejung}\cite{ShiTak97}
Let $(a_k)\in\ell^\infty$ and $a\in\R$ be such that $\mu((a_k))\le a$ for all Banach limits 	$\mu$ and  $\lsupk(a_{k+1}-a_k)\le 0$. Then $\lsupk a_k\le a$.
\elem

Given a sequence $(a_k)_{k\ge 1}$, consider for all $n,p\ge 1$ the following average
\beq
C_{n,p}((a_k))=\frac1p\sum_{i=n}^{n+p-1}a_i.
\eeq
For simplicity we shall write $C_{n,p}(a_k)$.

Lemma \ref{lemma-Banach-limits-Saejung} is proved using a result that goes back to Lorentz  \cite{Lor48}.

\blem\label{char-Banach-lim-Cnp-leq-a}
Let $(a_k)\in\ell^\infty$ and $a\in\R$. The following are equivalent:
\be
\item $\mu((a_k))\le a$ for all Banach limits $\mu$.
\item For all $\eps>0$ there exists $P\ge 1$ such that $C_{n,p}(a_k)\le a+\eps$ for all $p\ge P$ and  $n\ge 1$.
\ee
\elem

In fact, one only needs the implication `(i) $\Ra$ (ii)' which is
established in \cite{ShiTak97} using the following sublinear
functional 
\[ q:l^{\infty}\to\R, \quad q((a_k)):=\limsup_{p\to\infty}\ \sup_{n\ge 1} \frac{1}{p}
\sum^{n+p-1}_{i=n} a_i=\limsup_{p\to\infty}\ \sup_{n\ge 1}C_{n,p}(a_k).\]

\noindent Now fix $(a_k)\in l^{\infty}$ and use the Hahn-Banach theorem to show the
existence of a linear functional $\mu:l^{\infty}\to\R$ such that
$\mu \le q$ and $\mu((a_k))=q((a_k)).$ Then $\mu$ is a Banach limit and
so -- by (i) -- $q((a_k))=\mu((a_k))\le a$ which gives (ii). Our
elimination of the use of the Banach limit $\mu$ was obtained in two
steps: first, the proof that -- for the sequence in question in the proof
from \cite{Sae10} -- the fact $\mu ((a_k))\le a$ holds for all Banach
limits $\mu$ could be modified to directly showing this for $q$ instead of
$\mu.$ This already established the actual elimination of the  use of
the axiom of choice hidden in the application of the Hahn-Banach theorem
(for the nonseparable space $l^{\infty}$) since the existence of $q$
follows by just using uniform arithmetical comprehension in the form of an
operator $E:\N^{\N}\to \{ 0,1\}$ defined by
\[ E(f)=0 \leftrightarrow \forall n\in\N (f(n)=0),\]
that is needed (and sufficient) to form both the `$\sup$' as well as the
`$\limsup$' in the definition of $q$ (as a function in $(a_k)$).
Using an argument due to Feferman \cite{Fef77}, the use of $E$ can
(over the system used to formalize the overall proof) be eliminated in
favor of ordinary (non-uniform) arithmetic comprehension
\[ \forall f:\N^2\to\N \,\exists g:\N\to\N\,\forall k\in\N\
\big( g(k)=0\leftrightarrow \forall n\in\N\,(f(k,n)=0)\big),\]
which is covered (as a very special case of general comprehension over
numbers) by the existing logical metatheorems and results in extractable
bounds of restricted complexity, namely bounds that are definable by
primitive recursive functionals in the extended sense of G\"odel's
calculus $T$ \cite{God58} (which, however, contains the famous
so-called Ackermann function), though in general not of ordinarily
primitive recursive type. 

In order to get a bound having the latter much more restricted complexity we -- in a second step -- also eliminated the use of $q$ in favor of just elementary lemmas on the finitary objects $C_{n,p}$. In the following, rather than going through these two steps separately, we just present the resulting elementary lemmas on the averages $C_{n,p}$ which we will need later. The first lemma collects some obvious facts.

\blem\label{prop-cNP-Bl}
Let  $(a_k),(b_k)$ be sequences of real numbers and $\alpha\in\R$.
\be
\item\label{Cnp-incr} If $a_k\le b_k$  for all $k\ge N$, then  $C_{n,p}(a_k)\le C_{n,p}(b_k)$ for all $n\ge N$ and  $p\ge 1$.
\item\label{Cnp-an-const} If $a_k=c\in \R$ for all $k\ge N$, then $C_{n,p}(a_k)=c$ for all  $n\ge N$ and  $p\ge 1$.
\item\label{Cnp-linear} For all $n,p\ge 1$, $C_{n,p}(a_k+b_k)=C_{n,p}(a_k)+C_{n,p}(b_k)$ and $C_{n,p}(\alpha a_k)=\alpha C_{n,p}(a_k)$.
\ee
\elem

\blem\label{Cnp-basic-prop-1-rate}
Let $(a_k)$ be a sequence of real numbers, $a\in\R$ and $P:(0,\infty)\to\Z_+$ be such that 
\beq
\forall\eps>0\,\forall n\ge 1\,\,\big(C_{n,P(\eps)}(a_k) \le a+\eps\big).
\eeq
Assume that $\lsupk(a_{k+1}-a_k)\le 0$ with effective rate $\theta$.

Then $\lsupk a_k\le a$ with effective rate $\psi$, given by
\beq
\psi(\eps,P,\theta)= \theta\left(\frac{\eps}{\tilde{P}+1}\right)+\tilde{P},
\eeq
where $\ds \tilde{P}:=P\left(\frac{\eps}2\right)$.
\elem
\begin{proof}
By hypothesis,
\bua
C_{n,\tilde{P}}(a_k) \le  a+\frac{\eps}2\quad  \text{for all }n\ge 1,
\eua
and
\bua
a_{k+1}-a_k\le  \frac{\eps}{\tilde{P}+1} \quad \text{for all }k\ge \theta\left(\frac{\eps}{\tilde{P}+1}\right).
\eua
Let $n\ge \psi(\eps,P,\theta)$. Then  $n=n_0+\tilde{P}$ for some $\ds n_0\ge \theta\left(\frac{\eps}{\tilde{P}+1}\right)$. We get that for each $i=0,\ldots, \tilde{P}-1$, 
\bua
a_{n} &=& a_{n_0+\tilde{P}} = a_{n_0+i}+(a_{n_0+i+1}-a_{n_0+i})+(a_{n_0+i+2}-a_{n_0+i+1})+\ldots + (a_{n_0+\tilde{P}}-a_{n_0+\tilde{P}-1})\\
&\le & a_{n_0+i}+\frac{(\tilde{P}-i)\eps}{\tilde{P}+1}.
\eua
By adding the inequalities, we get that 
\bua
\tilde{P}a_{n} &\le & \big(a_{n_0}+a_{n_0+1}+\ldots+a_{n_0+\tilde{P}-1}\big)+\frac{(1+2+\ldots +\tilde{P})\eps}{\tilde{P}+1}\\
&=& \big(a_{n_0}+a_{n_0+1}+\ldots+a_{n_0+\tilde{P}-1}\big)+\frac{\tilde{P}\eps}2,
\eua
hence
\bua
a_{n} &\le & C_{n_0,\tilde{P}}(a_k)+\frac{\eps}2 \leq a+\frac{\eps}2+\frac{\eps}2=a+\eps.
\eua
\end{proof}

\blem\label{Cnp-ak-conv-0} 
Assume that $(a_k)$ is nonnegative and $\limk a_k=0$. Then $\ds\limp C_{n,p}(a_k)=0$ uniformly in $n$. 

Furthermore, if $\vp$ is a rate of convergence of $(a_k)$, then for all $\eps\in (0,2)$,
\[
\forall p\ge P(\eps,\vp,L)\,\forall n\ge 1 \,\,\left(C_{n,p}(a_k)\le \eps\right),
\]
where 
\beq
P(\eps,\vp,L)=\left\lceil\frac{2L\vp(\eps/2)}{\eps}\right\rceil,\eeq
with $L\in\R$ being an upper bound on $(a_k)$.
\elem
\begin{proof}
 Let $\vp, L, \eps$ be as in the hypothesis. We shall denote $P(\eps,\vp,L)$  simply by $P$. Since $\vp$ is a rate of convergence of $(a_k)$, we have that 
$a_k \le \ds\frac{\eps}2$ for all $k\ge \vp(\eps/2)$. Furthermore, 
\beq
\frac{L\vp(\eps/2)}p\le \ds\frac{\eps}2 \quad\text{ for all } p\ge P.
\eeq
Let $p\ge P$ and $n\ge 1$. We have two cases:
\be
\item $n\ge \vp(\eps/2)$. Then 
\bua
C_{n,p}(a_k) &=& \frac1p\sum_{i=n}^{n+p-1}a_i \le  \frac1p\cdot \ds\frac{p\eps}2=\ds\frac{\eps}2<\eps.
\eua
\item $n<\vp(\eps/2)$. Then 
\bua
C_{n,p}(a_k) &\le & \frac1p\sum_{i=n}^{\vp(\eps/2)-1}a_i+\frac1p\sum_{i=\vp(\eps/2)}^{\vp(\eps/2)+p-1}a_i 
\le \frac{\left(\vp(\eps/2)-n\right)L}p+ \frac{\eps}2 \le \frac{\vp(\eps/2)L}p+ \frac{\eps}2\\
&\le & \eps.
\eua
\ee
Thus, we have proved that $C_{n,p}(a_k) \le \eps$ for all $p\ge P$ and  $n\ge 1$.
\end{proof}

\section{Quantitative properties of an approximate fixed point sequence}
\label{section-Banach}

In the following, $X$ is a complete CAT(0) space, $C\se X$ is a bounded convex closed subset and $T:C\to C$ is a nonexpansive mapping. We assume that $C$ is bounded with diameter $d_C$ and consider $M\in\Z_+$ with $M\ge d_C$.

For $t\in (0,1)$ and $u\in C$, define 
\beq
T_t^u:C\to C, \quad T_t^u(y)=tu\oplus(1-t)Ty. \label{averaged-u}
\eeq

It is easy to see that $T_t^u$ is a contraction with contractive constant $L:=1-t$, so it has a unique fixed point $z_t^u\in C$ by Banach's Contraction Mapping Principle. Hence, $z_t^u$ is the unique solution of the fixed point equation
\beq
z_t^u=tu\oplus(1-t)Tz_t^u.\label{Halpern-continuous}
\eeq

\bprop\label{properties-yn-ztu-gammant}
Let $(y_n)$ be a sequence in $C$, $u\in C$, $t\in (0,1)$, and $(z_t^u)$ be 
defined by (\ref{Halpern-continuous}). Define for all $n\ge 1$
\beq
\gamma_n^t:=(1-t)d^2(u,Tz_t^u)-d^2(y_n,u).\label{def-gamma-n-t}
\eeq
\be
\item For all $n\ge 1$,
\beq
d^2(y_n, z_t^u) \le  d^2(y_n,u)+\frac1{t}a_n-(1-t)d^2(u,Tz_t^u),\label{ineq-yn-zt}
\eeq
where 
\beq
a_n:=d^2(y_n,Ty_n)+2Md(y_n,Ty_n).\label{def-an-yn}
\eeq
\item\label{Cnp-gammant-1} If $(y_n)$ is asymptotically regular with rate of asymptotic regularity $\vp$, then for all $\eps\in(0,2)$, 
\beq
\forall p\ge P(\eps,t,M,\vp)\,\forall m\ge 1\,\, \left(C_{m,p}(\gamma_n^t) \le \eps \right),
\eeq 
where
\beq
P(\eps,t,M,\vp)=\left\lceil\frac{6M^2}{t\eps}\vp\left(\ds\frac{t\eps}{6M}\right)\right\rceil. \label{def-Pteps}
\eeq
\item\label{Cnp-gammant-2} Assume that $(y_n)$ is asymptotically regular and  $\ds \limn d(y_n,y_{n+1})=0$. Then $\ds \lsupn \gamma_n^t\le 0$.
Furthermore, if $\vp$ is a rate of asymptotic regularity of $(y_n)$,  and $\tilde{\vp}$ is a rate of convergence of  $(d(y_n,y_{n+1}))$ towards $0$, then  $\ds \lsupn \gamma_n^t\le 0$ with effective rate $\psi$, defined by
\beq
\psi(\eps,t,M,\vp,\tilde{\vp})=\tilde{\vp}\left(\frac{\eps}{2M(P\left(\eps/2,t,M,\vp\right)+1)}\right)+P\left(\eps/2,t,M,\vp\right),
\eeq
with $P$ given by (\ref{def-Pteps}).
\ee
\eprop
\begin{proof}
For simplicity, we shall denote $z_t^u$ by $z_t$.
\be
\item We get that for all $n\ge 1$, 
\bua
d^2(y_n, z_{t}) &=& d^2(y_n, tu\oplus(1-t)Tz_{t})\\
&\leq & td^2(y_n,u)+(1-t)d^2(y_n,Tz_{t})-t(1-t)d^2(u,Tz_{t})\quad \text{by (\ref{CAT0-ineq-t}})\\
&\leq & td^2(y_n,u)+(1-t)\big(d(y_n,Ty_n)+d(Ty_n,Tz_{t})\big)^2-t(1-t)d^2(u,Tz_{t})\\
&& \text{by the triangle inequality}\\
&\leq & td^2(y_n,u)+(1-t)\big(d(y_n,Ty_n)+d(y_n,z_{t})\big)^2-t(1-t)d^2(u,Tz_{t})\\
 && \text{by the nonexpansiveness of }T\\
&= &  td^2(y_n,u)+ (1-t)d^2(y_n,Ty_n)+2(1-t)d(y_n,Ty_n)d(y_n,z_{t})\\
&& + (1-t)d^2(y_n, z_{t})- t(1-t)d^2(u,Tz_{t})\\
&\le & td^2(y_n,u)+ (1-t)d^2(y_n,Ty_n)+2M(1-t)d(y_n,Ty_n)+(1-t)d^2(y_n, z_{t})\\
&& - t(1-t)d^2(u,Tz_{t})
\eua
Thus, for all $n\ge 1$,
\bua
td^2(y_n, z_{t}) &\le & td^2(y_n,u)+ (1-t)d^2(y_n,Ty_n)+2M(1-t)d(y_n,Ty_n)-t(1-t)d^2(u,Tz_{t})\\
&\le & td^2(y_n,u)+ d^2(y_n,Ty_n)+2Md(y_n,Ty_n)-t(1-t)d^2(u,Tz_{t}).
\eua
Hence, (\ref{ineq-yn-zt}) follows.
\item  Let $\eps\in(0,2)$. By (\ref{ineq-yn-zt}), we get that 
\bua
0\le d^2(y_n,u)+\frac1{t}a_n-(1-t)d^2(u,Tz_{t}), 
\eua
 hence $\ds \gamma_n^t\le \frac1{t}\,a_n$ for all $n\ge 1$. It follows by 
Lemma \ref{prop-cNP-Bl}.(\ref{Cnp-incr}) that
\[C_{m,p}(\gamma_n^t)\le C_{m,p}\left(\frac1{t}\,a_n\right)\quad\text{for all }m\ge 1,\, p\ge 1.\]
Furthermore, $\ds \limn a_n=\limn \left(d^2(y_n,Ty_n)+2Md(y_n,Ty_n)\right)=0$ and, given a rate of asymptotic regularity for $(y_n)$, we can easily verify that $\ds \vp\left(\frac{\eps}{3M}\right)$ is a rate of convergence of $(a_n)$ towards $0$. 

Then $\ds \vp\left(\frac{t\eps}{3M}\right)$ is a rate of convergence of $\ds \frac1{t}\,a_n$ towards $0$. Since $\ds L:=\frac{3M^2}{t}$ is an upper bound for $\left(\frac1{t}a_n\right)$, we can apply Lemma \ref{Cnp-ak-conv-0} for this sequence to conclude that
\beq
C_{m,p}\left(\frac1{t}a_n\right) \le \eps \quad\text{for all } p\ge P(\eps,t,M,\vp) \text{ and  }m\ge 1.
\eeq
\item We have that 
\bua
|\gamma_{n+1}^t-\gamma_n^t|&=& |((1-t)d^2(u,Tz_t)-d^2(y_{n+1},u))-((1-t)d^2(u,Tz_t)-d^2(y_n,u))|\\
&=& |d^2(y_n,u)-d^2(y_{n+1},u)|= |d(y_n,u)+d(y_{n+1},u)|\cdot |d(y_n,u)-d(y_{n+1},u)|\\
&\le & 2Md(y_n,y_{n+1}).
\eua
Since $\limn 2Md(y_n,y_{n+1})=0$, we get that
\bua
\lsupn (\gamma_{n+1}^t-\gamma_n^t) \le 0.
\eua
with effective rate $\ds\tilde{\vp}\left(\frac{\eps}{2M}\right)$. Apply (\ref{Cnp-gammant-1}) and Lemma \ref{Cnp-basic-prop-1-rate} to conclude that
\beq
\lsupn \gamma_n^t\le 0.
\eeq
with effective rate $\psi(\eps,t,M,\vp,\tilde{\vp})$.
\ee
\end{proof}

\blem\label{Halpern-d-zt}
Let $u,x\in C$ and $(x_n)$ be the Halpern iteration defined by (\ref{def-Halpern-iteration}). Then for all  $t\in (0,1)$ and  $n\ge 0$,
\bea
\!\!\!\! d^2(x_{n+1},z_t^u) &\le & (1-\lambda_{n+1})d^2(x_n,z_t^u)+\lambda_{n+1}\bigg((1-t)d^2(u,Tz_t^u)-d^2(x_{n+1},u)\bigg)+M^2t.
\label{d2xn+1-zt-v3}
\eea
\elem
\begin{proof}
\bua
d^2(x_{n+1},z_t^u) &\leq & \lambda_{n+1}d^2(u,z_t^u)+(1-\lambda_{n+1})d^2(Tx_n,z_t^u)-\lambda_{n+1}(1-\lambda_{n+1})d^2(u,Tx_n)\\
&& \text{by (\ref{CAT0-ineq-t})  applied to } d^2(x_{n+1},z_t^u)=d^2(\lambda_{n+1}u\oplus(1-\lambda_{n+1})Tx_n,z_t^u)\\
&\le & \lambda_{n+1}d^2(u,z_t^u)-\lambda_{n+1}(1-\lambda_{n+1})d^2(u,Tx_n)+\\
&& +(1-\lambda_{n+1})\bigg(td^2(Tx_n,u)+(1-t)d^2(Tx_n,Tz_t^u)-t(1-t)d^2(u,Tz_t^u)\bigg)\\
&& \text{again by (\ref{CAT0-ineq-t}) applied to } d^2(Tx_n,z_t^u)=d^2(Tx_n,tu\oplus(1-t)Tz_t^u)\\
&\le & \lambda_{n+1}d^2(u,z_t^u)-\lambda_{n+1}(1-\lambda_{n+1})d^2(u,Tx_n)+\\
&& +(1-\lambda_{n+1})\bigg(td^2(Tx_n,u)+(1-t)d^2(x_n,z_t^u)-t(1-t)d^2(u,Tz_t^u)\bigg)\\
&& \text{by the nonexpansiveness of } T \\
&=& (1-\lambda_{n+1})(1-t)d^2(x_n,z_t^u)+\\
&&+d^2(Tx_n,u)\bigg((1-\lambda_{n+1})t-\lambda_{n+1}(1-\lambda_{n+1})\bigg)+\\
&& + \lambda_{n+1}(1-t)^2d^2(u,Tz_t^u)- (1-\lambda_{n+1})t(1-t)d^2(u,Tz_t^u)\\
&& \text{since }d(u,z_t^u)=(1-t)d(u,Tz_t^u)\\
&=& (1-\lambda_{n+1})(1-t)d^2(x_n,z_t^u)+\\
&&+\lambda_{n+1}\bigg((1-t)d^2(u,Tz_t^u)-(1-\lambda_{n+1})^2d^2(Tx_n,u)\bigg)\\
&&+d^2(Tx_n,u)\bigg((1-\lambda_{n+1})t-\lambda_{n+1}(1-\lambda_{n+1})+\lambda_{n+1}(1-\lambda_{n+1})^2
\bigg)+\\
&& +d^2(u,Tz_t^u)\bigg(\lambda_{n+1}(1-t)^2-(1-\lambda_{n+1})t(1-t)-\lambda_{n+1}(1-t)\bigg)
\eua
\bua
&=& (1-\lambda_{n+1})(1-t)d^2(x_n,z_t^u)+\lambda_{n+1}\bigg((1-t)d^2(u,Tz_t^u)-d^2(x_{n+1},u)\bigg)\\
&& +d^2(Tx_n,u)\bigg(t-\lambda_{n+1}t+\lambda_{n+1}^3-\lambda_{n+1}^2\bigg)+d^2(u,Tz_t^u)(t^2-t)\\
&& \text{since }d(x_{n+1},u)=(1-\lambda_{n+1})d(Tx_n,u)\\
&\le & (1-\lambda_{n+1})(1-t)d^2(x_n,z_t^u)+\lambda_{n+1}\bigg((1-t)d^2(u,Tz_t^u)-d^2(x_{n+1},u)\bigg)+\\
&& + td^2(Tx_n,u)\\
&\le & (1-\lambda_{n+1})d^2(x_n,z_t^u)+\lambda_{n+1}\bigg((1-t)d^2(u,Tz_t^u)-d^2(x_{n+1},u)\bigg)+M^2t.
\eua
\end{proof}

In \cite{Bro67}, Browder showed that for Hilbert spaces $X$ and $z^u_t$ defined as above one has, for $t\to 0,$ the strong convergence of $z^u_t$ towards the fixed point of $T$ that is closest to $u.$
Halpern \cite{Hal67} gave a much more elementary proof of this result. In fact, it follows from his proof that the strong convergence of $(z^u_{t_k})_k$ holds for any nonincreasing sequence $(t_k)$ in $(0,1)$ (while the limit in general will not be a fixed point of $T$ unless $t_k$ converges towards $0$). In \cite{Koh11}, the first author extracted explicit and highly uniform rates of metastability from both proofs (again effective rates of
convergence are ruled out on general grounds, see \cite{Koh11}). In \cite{Kir03}, Kirk showed that Halpern's proof goes through (essentially unchanged) in the context of CAT$(0)$ spaces. Consequently, this also holds for the bound extracted from Halpern's proof in \cite{Koh11} (for the Hilbert ball this is already 
due to \cite{GoeRei84}):

\bprop\label{quant-Browder-Halpern}
Let $(t_k)$ be a nonincreasing sequence in $(0,1).$
Then for all $\varepsilon >0$ and  $g:\N\to\N$ the following holds
\[\exists K_0\le K(\varepsilon,g,M)\,\forall i,j\in [K_0,K_0+g(K_0)] \
\big( d(z^u_{t_i},z^u_{t_j})\le \varepsilon\big), \]
where
\beq
K(\varepsilon,g,M):=\tilde{g}^{(\lceil M^2/\varepsilon^2\rceil)}(0),
\label{def-K-tk-1k+1}
\eeq
with $\tilde{g}(k):=k+g(k).$
\eprop
\begin{proof}
For the case of $X$ being a Hilbert space, Proposition
\ref{quant-Browder-Halpern} is
proved in \cite{Koh11}. Things extend unchanged to the CAT(0)-setting
with the same reasoning as in \cite{Kir03}.
\end{proof}
\begin{remark}
\begin{enumerate}
\item
Reasoning as in \cite{Koh11}, Proposition \ref{quant-Browder-Halpern} implies the following rate of
metastability for sequences $(t_k)$ that are not necessarily nonincreasing: let $(t_k)_{k\ge 0}$ be a sequence in $(0,1)$ that converges towards $0$ with rate of convergence $\beta$ and $\chi:\N\to\N$ be defined by
$\chi(k)=\beta\left(\frac{1}{k+1}\right)$, hence
\[ \forall k\in\N\,\forall i\ge \chi(k)\ \left(t_i\leq
\frac{1}{k+1}\right). \]
Finally, let $h:\N\to\N$ be such that $t_k\ge \frac{1}{h(k)+1}$ for all
$k\in\N.$
Then for all $\varepsilon >0$ and $g:\N\to\N$ the following holds
\[\exists K_0\le K( \varepsilon,g,M,\chi)\,\forall i,j\in [K_0,K_0+g(K_0)] \
\big( d(z^u_{t_i},z^u_{t_j})\le \varepsilon\big), \]
where
\[ K(\varepsilon,g,M,\chi,h):=\chi^+\big(
g^{(\lceil 4M^2/\varepsilon^2\rceil)}_{h,\chi}(0)\big), \ \mbox{with} \
g_{h,\chi}(k):=\max\{ h(i)\mid i\le \chi(k)+g(\chi(k))\}. \]
\item
Instead of a rate of convergence $\beta$ it suffices in `(i)' above
to have a rate of metastability $\beta_g$, hence a mapping $\beta_g$ such
that
\[
\forall k\in\N \,\forall i\in [\beta_g(k),\tilde{g}(\beta_g(k))] \
\left( t_i \le \frac{1}{k+1}\right).
\]
\end{enumerate}
\end{remark}

\section{Proof of Theorem \ref{Halpern-rate}}\label{proof-Halpern-rate}

Let $\eps\in(0,2)$ and $g:\N\to\N$ be fixed. Let $\tilde{\Phi},\Phi$ be as in Proposition \ref{Halpern-rate-as-reg-hyperbolic}. To make the proof easier to read, we shall omit parameters $M, \Phi,\tilde{\Phi}, \theta,\alpha,\beta$ for all the functionals which appear in the following.

Take 
\beq
\eps_0:=\frac{\eps^2}{24(M+1)^2}\,. \label{def-eps0}
\eeq
Then $\eps_0<1$ and
\beq
\eps_0^2+2M\eps_0+ M^2\eps_0\le \eps_0(M+1)^2\le \frac{\eps^2}{24}. \label{eps0-prop}
\eeq

We consider in the sequel $\ds t_k:=\frac{1}{k+1}$, with rate of convergence towards $0$ given by $\ds \gamma(\eps):=\left\lceil\frac{1}{\eps}\right\rceil. $

Denote $z_{t_k}^u$ simply by $z_k^u$ and let
\bua
\gamma_n^k:=\frac{k}{k+1}d^2(u,Tz_k^u)-d^2(x_{n+1},u). 
\eua
Thus, $\gamma_n^k$ is defined as in (\ref{def-gamma-n-t}) by taking $\ds t:=\frac{1}{k+1}$ and $y_n:=x_{n+1}$.

We can apply Propositions \ref{properties-yn-ztu-gammant}.(\ref{Cnp-gammant-2}) and \ref{Halpern-rate-as-reg-hyperbolic} to conclude that $\ds \lsupn \gamma_n^k\le 0$ for each $k\ge 0$,  with effective rate $\chi_k$, given by  
\bua
\chi_k(\eps)&=& \tilde{\Phi}\left(\frac{\eps}{2M(\tilde{P}_k\left(\eps\right)+1)}\right)+\tilde{P}_k\left(\eps\right),\text{ where}\\
\tilde{P}_k\left(\eps\right)&=& \left\lceil\frac{12M^2(k+1)}{\eps}\Phi\left(\ds\frac{\eps}{12M(k+1)}\right)\right\rceil.
\eua

For all $k\ge 0$, let us denote

\bua
\chi^*_k(\eps)&:=& \chi_k(\eps/2),\\
\Theta_k(\eps)&:=  &\Theta(\eps,M^2,\theta,\chi^*_k) =  \theta\left(\chi^*_k(\eps/3)-1+\left\lceil\ln\left(\frac{3 M^2}{\eps}\right)\right\rceil\right)+1, \\
\Delta^*_k(\eps,g)&:= & \Delta(\eps,g,M^2,\theta,\chi^*_k)=\frac{\eps}{3g_{\eps,k}\left(\Theta_k(\eps)-\chi^*_k(\eps/3)\right)},
\eua
where $g_{\eps,k}(n)=n+g(n+\chi^*_k(\eps/3))$, $\Theta$ is defined by (\ref{def-Theta-quant-lemma-0}) and $\Delta$ by (\ref{def-Delta-quant-lemma-0}). Now let
\bua
f,f^*:\N\to\N,\,\, f(k):= \max\left\{\gamma\left(\frac{\Delta^*_k(\eps^2/4,g)}{M^2}\right), k\right\}-k, &
f^*(k) := f(k+\gamma(\eps_0))+\gamma(\eps_0).
\eua

We can apply Proposition \ref{quant-Browder-Halpern} for $\eps_0$ and $f^*$ to get the existence of $K_1\le K(\eps_0,f^*)$ such that for all $k,l\in [K_1,K_1+f^*(K_1)]$
\bea
d(z^u_k,z^u_l)\le \eps_0, \label{K1-dzuk,zul}
\eea
where $K$ is defined by (\ref{def-K-tk-1k+1}). Let 
\bua
K_0 &:=&  K_1+\gamma(\eps_0),\\
K^*(\eps_0,f) &:= &  K(\eps_0,f^*)+\gamma(\eps_0)=
\widetilde{f^*}^{(\lceil M^2/\eps_0^2\rceil)}(0)+\gamma(\eps_0),
\eua
with $\widetilde{f^*}(k):=k+f^*(k)$.

Then $\gamma(\eps_0)\le K_0\le K^*(\eps_0,f)$ and it is easy to see, using (\ref{K1-dzuk,zul}), that 
\beq
\forall k,l\in[K_0,K_0+f(K_0)]\,\,\big( d(z^u_k,z^u_l)\le \eps_0\big). \label{K0-meta-dzuk,zul}
\eeq
It follows that for all $k,l\in[K_0,K_0+f(K_0)]$,
\bua
d^2(u,Tz_k^u) &\le &  (d(u,Tz_l^u)+d(Tz_l^u,Tz_k^u))^2\le d^2(u,Tz_l^u)+d^2(Tz_l^u,Tz_k^u)+2d(u,Tz_l^u)d(Tz_l^u,Tz_k^u)\\
&\le & d^2(u,Tz_l^u)+d^2(z_l^u,z_k^u)+2Md(z_l^u,z_k^u)\\
&\le &   d^2(u,Tz_l^u)+\eps_0^2+2M\eps_0.
\eua

Let 
\beq
J:= K_0+f(K_0)=\max\left\{\gamma\left(\frac{\Delta^*_{K_0}(\eps^2/4,g)}{M^2}\right),K_0\right\}
\eeq

Then for all $n\ge 1$,
\bua
\gamma_n^J &=& \frac{J}{J+1}\,d^2(u,Tz_J^u)-d^2(x_{n+1},u)\le \frac{J}{J+1}
\big( d^2(u,Tz_{K_0}^u)+\eps_0^2+2M\eps_0\big)-d^2(x_{n+1},u) \\
&\le & d^2(u,Tz_{K_0}^u)-d^2(x_{n+1},u)+\eps_0^2+2M\eps_0 \\
&=&  \frac{{K_0}}{{K_0}+1}d^2(u,Tz_{K_0}^u)-d^2(x_{n+1},u)+\eps_0^2+2M\eps_0+ \frac{1}{K_0+1}d^2(u,Tz_{K_0}^u)\\
&=& \gamma_n^{K_0}+\eps_0^2+2M\eps_0+ \frac{1}{K_0+1}d^2(u,Tz_{K_0}^u)\le \gamma_n^{K_0}+\eps_0^2+2M\eps_0+ \frac{1}{K_0+1}M^2\\
&\le & \gamma_n^{K_0}+\eps_0^2+2M\eps_0+ M^2\eps_0 \qquad\text{as } K_0\ge \gamma(\eps_0)\\
&\le & \gamma_n^{K_0}+\frac{\eps^2}{24} \quad \text{by (\ref{eps0-prop}).}
\eua 
It follows that for all $n\ge \chi^*_{K_0}(\eps^2/12)$,
\bua
\gamma_n^J & \le &  \gamma_n^{K_0}+\frac{\eps^2}{24}\le \frac{\eps^2}{12}.
\eua
Applying (\ref{d2xn+1-zt-v3}) with $\ds t:=\frac{1}{J+1}$, we get that for all $n\ge 1$,
\bua
d^2(x_{n+1},z_J^u) &\le & (1-\lambda_{n+1})d^2(x_n,z_J^u)+\lambda_{n+1}\bigg(\frac{J}{J+1}\, d^2(u,Tz_J^u)-d^2(x_{n+1},u)\bigg)+\frac{M^2}{J+1}\\
&=& (1-\lambda_{n+1})d^2(x_n,z_J^u)+\lambda_{n+1}\gamma_n^J+\frac{M^2}{J+1}\\
&\le & (1-\lambda_{n+1})d^2(x_n,z_J^u)+\lambda_{n+1}\gamma_n^J+\Delta^*_{K_0}(\eps^2/4,g)
\eua
since $\ds J\ge \gamma\left(\frac{\Delta^*_{K_0}(\eps^2/4,g)}{M^2}\right)$, hence $\ds \frac{1}{J+1}\le \frac{\Delta^*_{K_0}(\eps^2/4,g)}{M^2}$.  It follows that we can apply Lemma \ref{quant-Aoyama-all-meta-rate-0} with $\eps:=\eps^2/4$ to conclude that for all $n\in [N,N+g(N)]$

\beq
d^2(x_{n},z_J^u) \le \frac{\eps^2}4,\quad\text{hence}\quad  d(x_{n},z_J^u)\le \frac{\eps}2,
\eeq
where $N:=\Theta_{K_0}(\eps^2/4)$.

Let now 
\bua
\theta^+(n)&:=& \max\{\theta(i)\mid i\le n\},\\
\Gamma &:=&\max \{\chi^*_k(\eps^2/12) \mid  \gamma(\eps_0)\le k\le 
K^*(\eps_0,f)\}\ge \chi^*_{K_0}(\eps^2/12),\\
\Sigma(\eps,g)&:= &\theta^+\left(\Gamma-1+\left\lceil\ln\left(\frac{12 M^2}{\eps^2}\right)\right\rceil\right)+1 \label{def-Sigma-proof} \\
&\ge & \theta^+\left(\chi^*_{K_0}(\eps^2/12)-1+\left\lceil\ln\left(\frac{12 M^2}{\eps^2}\right)\right\rceil\right)+1\\
&\ge &  \theta\left(\chi^*_{K_0}(\eps^2/12)-1+\left\lceil\ln\left(\frac{12 M^2}{\eps^2}\right)\right\rceil\right)+1\\
&=& \Theta_{K_0}(\eps^2/4) = N.
\eua

We get finally that $N\le \Sigma(\eps,g)$ is such that for all $n,m\in [N,N+g(N)]$,
\beq
d(x_n,x_m)\le  d(x_{n},z_J^u)+d(x_m,z_J^u)\le \eps.
\eeq
\hfill $\Box$

\mbox{ } 

\noindent
{\bf Acknowledgements:} \\[1mm] 
Ulrich Kohlenbach has been supported by the German Science Foundation (DFG Project KO 1737/5-1). Part of his research has been carried out while visiting the Simion Stoilow Institute of Mathematics of the 
Romanian Academy supported by the BITDEFENDER guest professor program. \\[1mm]
Lauren\c tiu Leu\c stean has been supported by a grant of the Romanian 
National Authority for Scientific Research, CNCS - UEFISCDI, project 
number PN-II-ID-PCE-2011-3-0383.

\end{document}